 \newtheorem{thm}{Theorem}[section]
 \newtheorem{cor}[thm]{Corollary}
 \newtheorem{lem}[thm]{Lemma}
 \newtheorem{prop}[thm]{Proposition}
 \newtheorem{defn}[thm]{Definition}
 \newtheorem{rem}[thm]{Remark}
\def\k{\mathbbm{k}}
 \newcommand{\Hom}{\mathrm{Hom}}
\title{Quillen-Suslin Theorem for connected cochain DG algebras}
\author{X.-F. Mao}
\address{Department of Mathematics, Shanghai University, Shanghai, China, 200444}
\address{Newtouch center for Mathematics of Shanghai University, Shanghai, China, 200444}
\email{xuefengmao@shu.edu.cn}
\author{B.-Y. Zhu}
\address{Department of Mathematics, Shanghai University, Shanghai 200444, China}
\email{zhubiyan@shu.edu.cn}
\date{}
\begin{document}
 \def\abstactname{abstract}
\begin{abstract}
Let $\mathscr{A}$ be a connected cochain DG algebra and $P$ a DG $\mathscr{A}$-module such that its underlying graded module $P^{\#}$ is a finitely generated $\mathscr{A}^{\#}$-module. We show that $P$ is semi-free if it is semi-projective and it is categorically free if it is categorically projective. It can be considered as a generalization of the well-known Quillen-Suslin Theorem in DG context. As an application, we show that the ghost length and the cone length of a compact DG module coincide.

\end{abstract}

\subjclass[2010]{Primary 16E10,16E45,16W50,16E65}


\keywords{semi-free DG modules, semi-projective DG modules, cochain DG algebras, cone length, ghost length, level}
\maketitle
\section*{introduction}

 All along this paper, $\k$ be a field and $n\in \mathbb{Z}^{+}$. In the mid-1950s, Serre proved that every finitely generated module over the polynomial algebra $\k[x_1,\cdots, x_n]$ has a finitely generated free complement.  And he posed the problem: are projective modules over $\k[x_1,\cdots, x_n]$ free? In the case $n=1$, it is obviously true since $\k[x]$ is a principal ideal domain. In 1958, Seshadri proved that
 finitely generated projective $R[x]$-modules are free when $R$ is a principal ideal domain. In particular, setting $R=\k[x_1]$ gives an affirmative answer to Serre's problem when $n=2$. There was much interest in this problem for $n\ge 3$; indeed, it is an important impetus in the development of algebraic K-theory. Remarkably, Quillen \cite{Qui} and Suslin \cite{Sus} independently showed in 1976 that every finitely
generated, projective $\k[x_1,\cdots, x_n]$-module is free, a result that is now known as the Quillen-Suslin Theorem. A number of algorithms for this theorem have been presented in \cite{FG},\cite{LS1} and \cite{Fit}. The Quillen-Suslin Theorem has also found applications in signal processing (cf. \cite{YP}) and control theory (cf. \cite{BBV}).
In 1997, Laubenbacher and Woodburn \cite{LW} presented an algorithm for the Quillen-Suslin Theorem for coordinate rings of affine toric varieties.
For quotients of polynomial rings by monomial ideals, Vorst \cite{Vor} proved that finitely generated projective modules over such algebras are free. In 2000,
 Laubenbacher and Schlauch \cite{LS2} presented an algorithm quotients of the form $\k[x_0,\cdots,x_n]/I$ with $I$ a monomial ideal.

In this paper, we want to generalize the Quillen-Suslin Theorem to DG homological algebra. On a conceptual level, the main novelty is the study of the internal structure of a category of DG modules from a point of view inspired by classical homological algebra. This lead us to identify various versions of freeness and projectivity, and to investigate the relations between. When it comes to free (or projective) objects in the category of DG modules over a DG algebra, we see that they are always homotopically trivial, so they carry little information of the category of DG modulmes. In particular, the standard approach to homological algebra of module over a ring, via free resolution (or projective resolution), leads to a dead end in the cases of DG module over a DG algebra. Semi-free DG modules and semi-projective DG modules serve as a remedy for this difficulty. The semi-free (semi-projective resp.) DG modules retain some characteristics of freeness (projectivity resp.).
 The diagrams below, quoted from \cite{AFH}, respectively relates the various concepts of freeness and projectivity for DG modules:
\begin{align*}
    \xymatrix{
    &\txt{semi-free}\ar@<-.8ex>[r]\ar@<.5ex>[dr]|{/} & \txt{graded-free} \\
    \txt{categorically free\\~}\ar@<.5ex>[r] & \txt{semi-free and\\ quasitrivial} \ar@<.5ex>[l]|{/} \ar@<.5ex>[r] \ar@<.5ex>[u] & \txt{graded-free and \\ homotopically trivial} \ar@<.5ex>[ul]|{/} \ar@<.5ex>[u]}
\end{align*}

\begin{align*}
    \xymatrix{
    \txt{homotopically\\projective}\ar@<-.5ex>[r]|{/}\ar@<.5ex>[d] & \txt{semi-projective}\ar@<.5ex>[d]& \txt{graded-projective}\ar@<.5ex>[l]|{/} \ar@<.5ex>[d]\\
    \txt{homotopically\\equivalent to\\semi-projective}\ar@<.5ex>[u] & \txt{homotopically projective\\and\\graded-projective} \ar@<.5ex>[lu]\ar@<.5ex>[ru]\ar@<.5ex>[u]\ar@<.5ex>[dl]|{/} & \txt{projective\\underlying\\graded module} \ar@<.5ex>[u]\\
    \txt{categorically\\projective}\ar@<.5ex>[r] & \txt{semi-projective and\\ quasitrivial} \ar@<.5ex>[l] \ar@<.5ex>[r] \ar@<.5ex>[u] & \txt{graded-projective and \\ homotopically trivial} \ar@<.5ex>[l]}.
\end{align*}
 One well known fact is that a DG module is semi-projective (resp. categorically projective) if and only if it is a direct summand of a semi-free (resp. categorically free) DG module. Motivated from Quillen-Suslin Theorem, it is natural for one to ask whether a finitely generated semi-projective (resp. categorically projective) left DG module over a connected cochain DG algebra is semi-free (resp. categorically free)?
We give an affirmative answer to the above question. As applications, we find that our results are useful in the study of homological invariants for DG modules.

In group theory, the terminology `class'
 is used to measure the shortest length of a
filtration with sub-quotients of certain type.
Avramov-Buchweitz-Iyengar  \cite{ABI} introduced free class,
projective class and flat class for differential modules over a
commutative ring.  Inspired from their work, the first author and Wu \cite{MW3} introduced the notion of `DG free class' for semi-free DG modules over a DG algebra $\mathscr{A}$.
In brief, the DG free class of a semi-free DG $\mathscr{A}$-module $F$ is the shortest length of all strictly
increasing semi-free filtrations. For a more general DG module $M$, the first author and Wu \cite{MW3} introduced the invariant `cone length', which is the minimal DG free class of all its semi-free resolutions. The studies of these invariants for DG modules can be traced back to Carlsson's work in 1980s.
In \cite{Car}, Carlsson studied `free class' of solvable free DG
modules over a graded polynomial ring $R$ in $n$ variables of positive degree. By \cite[Theorem 16]{Car}, one sees that the cone length of any
totally finite DG $R$-module $M$ satisfies the inequality $\mathrm{cl}_RM\le n$. Note that the invariant `$\mathrm{cl}_RM$' was denoted by `$l(M)$' in \cite{Car}. The invariant `cone length' of a DG module plays a similar role in DG
homological algebra as the `projective dimension' of a module over a ring
does in classic homological ring theory (cf.\cite{MW3}).

Christensen \cite{Chr} introduces the invariant `ghost length' for objects in triangulated categories. This concept can be applied to complexes and DG modules.  The ghost length together with other invariants, including level, cone length and trivial
category, of DG modules are studied in \cite{Kur1,Kur2, Mao1}.
In \cite{HL1,HL2}, Hovey-Lockridge introduce the ghost dimension of a ring, which is defined as the maximum ghost lengthes of all perfect complexes.
One can naturally extend this definition to DG algebras.
Another important invariant of DG modules is called level. In the derived category $\mathscr{D}(\mathscr{A})$ of a DG algebra $\mathscr{A}$,
the level of a DG $\mathscr{A}$-module $M$ counts
the number of steps required to build $M$ out of $\mathscr{A}$ via triangles.  In \cite{ABIM}, some important and fundamental
properties of the level of DG modules are investigated.  To study topological spaces with
categorical representation theory, Kuriayashi \cite{Kur1} introduced the
levels for space and gave a
general method for computing the level of a space and studied the
relationship between the level and other topological invariants such
as Lusternik-Schnirelmann category. Later,
Schmidt \cite{Sch} used the properties of level of DG modules to study the
structure of the Auslander-Reiten quiver of some important cochain
DG algebra.

In
\cite{Mao1}, the first author proved that the cone length of a DG module is an upper bound of the ghost length when the base DG algebra is local chain. One can check easily that it is also true for the case of connected cochain DG algebras. For any compact DG $\mathscr{A}$-module, we show that  its cone length coincides with its ghost length.
Using the
well known ghost lemma for triangulated categories, Kuribayashi \cite{Kur1}
proved
 $$\mathrm{gh.len}_{\mathscr{A}}M+1\le
  \mathrm{level}_{\mathscr{D}(\mathscr{A})}^{\mathscr{A}}(M),$$
  for any $M\in \mathscr{D}(\mathscr{A})$. We show that  $$\mathrm{gh.len}_{\mathscr{A}}M+1 =
  \mathrm{level}_{\mathscr{D}(\mathscr{A})}^{\mathscr{A}}(M),$$
  when $M\in \mathscr{D}^c(\mathscr{A})$ and $\mathscr{A}$ is a connected cochain DG algebra.

\section{Notations and conventions}
In this section, we introduce some notation and conventions on DG algebras and DG modules.
There is some
overlap here with the papers \cite{MW1, MW2,FJ}. It is assumed that
the reader is familiar with basics on DG modules, triangulated
categories and derived categories. If this is not the case, we refer
to  \cite{Nee, Wei} for more details on them.

Throughout this paper, $\k$ is an algebraically closed field of characteristic $0$.
For any $\k$-vector space $V$, we write $V^*=\Hom_{\k}(V,\k)$. Assume that $\{e_i|i\in I\}$ is a basis of a finite-dimensional $\k$-vector space $V$.  We denote the dual basis of $V$ by $\{e_i^*|i\in I\}$, i.e., $\{e_i^*|i\in I\}$ is a basis of $V^*$ such that $e_i^*(e_j)=\delta_{i,j}$.

\subsection{Connected cochain DG algebras}
Let $\mathscr{A}$ be a $\mathbb{Z}$-graded $\k$-algebra. If there is a $\k$-linear map $\partial_{\mathscr{A}} : \mathscr{A} \to \mathscr{A}$ of degree 1 such that $\partial_{\mathscr{A}}^{2}=0$ and \[
    \partial_{\mathscr{A} }\left( ab \right)=\partial_{\mathscr{A} } \left( a \right)b+\left( -1 \right)^{ | a | }a\partial_{\mathscr{A} }\left( b \right)
\]
for all graded elements $a,b \in \mathscr{A}$, then $\mathscr{A}$ is called a cochain DG algebra. A cochain DG algebra $\mathscr{A}$ is called non-trivial if $\partial_{\mathscr{A}} \ne 0$.
For any cochain DG algebra $\mathscr{A}$, its underlying graded algebra is denoted by $\mathscr{A}^{\#}$. A cochain DG algebra $\mathscr{A}$ is called connected if
$\mathscr{A}^{\#}$ is a connected graded algebra. Throughout the paper, $\mathscr{A}$ will always be a connected cochain DG algebra, if no special assumption is emphasized.

The cohomology graded algebra of $\mathscr{A}$ is the graded algebra $$H(\mathscr{A})=\bigoplus_{i\in \Bbb{Z}}\frac{\mathrm{ker}(\partial_{\mathscr{A}}^i)}{\mathrm{im}(\partial_{\mathscr{A}}^{i-1})}.$$
 For any cocycle element $z\in \mathrm{ker}(\partial_{\mathscr{A}}^i)$, we write $\lceil z \rceil$ as the cohomology class in $H(\mathscr{A})$ represented by $z$.  A cochain algebra $\mathscr{A}$ is called connected if its underlying graded algebra $\mathscr{A}^{\#}$ is a connected graded algebra. One sees that $H(\mathscr{A})$ is a connected graded algebra if $\mathscr{A}$ is a connected cochain DG algebra.
 For any connected DG algebra $\mathscr{A}$,
   we write $\frak{m}$ as the maximal DG ideal $\mathscr{A}^{>0}$ of $\mathscr{A}$.
 We denote by $\mathscr{A}\!^{op}$ its opposite DG
algebra, whose multiplication is defined as
 $a \cdot b = (-1)^{|a|\cdot|b|}ba$ for all
graded elements $a$ and $b$ in $\mathscr{A}$.
Via the canonical surjection $\varepsilon: \mathscr{A}\to \k$, $\k$ is both a DG
$\mathscr{A}$-module and a DG $\mathscr{A}\!^{op}$-module. It is easy to check that the enveloping DG algebra $\mathscr{A}^e = \mathscr{A}\otimes \mathscr{A}\!^{op}$ of $\mathscr{A}$
is also a connected cochain DG algebra with $H(\mathscr{A}^e)\cong H(\mathscr{A})^e$, $\partial_{\mathscr{A}^e}=\partial_{\mathscr{A}}\otimes \mathscr{A}^{op} + \mathscr{A}\otimes
\partial_{\mathscr{A}^{op}}$ and  $\frak{m}_{\mathscr{A}^e}=\frak{m}_{\mathscr{A}}\otimes \mathscr{A}^{op}+\mathscr{A}\otimes \frak{m}_{\mathscr{A}^{op}}$.
\subsection{DG modules}
 A left DG module over $\mathscr{A}$ (DG $\mathscr{A}$-module for
short) is a complex $(M,\partial_{M})$ together with a left
multiplication $\mathscr{A}\otimes M \to M$ such that $M$ is a left graded
module over $\mathscr{A}$ and the differential $\partial_{M}$ of $M$ satisfies
the Leibniz rule
\begin{align*}
\partial_{M}(am)=\partial_{\mathscr{A}}(a)m + (-1)^{|a|}a\partial_{M}(m)
\end{align*}
for all graded elements $a \in \mathscr{A}, \, m \in M$. A right DG module
over $\mathscr{A}$ is defined similarly. One sees easily that right DG modules over $\mathscr{A}$ can be
identified with DG $\mathscr{A}\!^{op}$-modules.

For any $i\in \Bbb{Z}$, the $i$-th suspension of a DG
$\mathscr{A}$-module $M$ is the DG $\mathscr{A}$-module $\Sigma^iM$ defined by $(\Sigma^iM)^j=
M^{j+i}$. If $m \in M^l,$ the corresponding element in $(\Sigma^i
M)^{l-i}$ is denoted by $\Sigma^i m$. The action of $\mathscr{A}$ on $\Sigma^iM$ is
$$a\,(\Sigma^im) = (-1)^{|a|i}\Sigma^i(am),$$
for all graded element $ a\in \mathscr{A}$ and  $\Sigma^im\in \Sigma^iM$. And the
differential $\partial_{\Sigma^iM}$ of $\Sigma^iM$ is defined by
$$\partial_{\Sigma^iM}(\Sigma^im) = (-1)^i\Sigma^i\partial_{M}(m),$$
for all graded elements $m\in M$.

Let $Y$ be a subset of be a DG $\mathscr{A}$-module  $F$. It is called categorically free if for each DG module $\mathscr{A}$-module $P$ and any homogeneous map $g: Y\to P$ of degree $0$, there exists a unique morphism of DG $\mathscr{A}$-modules $f: F \to P$  such that $f|_Y=g$.
A DG $\mathscr{A}$-module $F$ is called categorically free if there exist a categorically free subset $Y\subset F$ such that $$
        F=\bigoplus_{y \in Y}[\mathscr{A}y\oplus\mathscr{A}\partial_{F}(y)].$$

Let $F$ be a DG $\mathscr{A}$-module. It is called called DG free, if it is isomorphic to a
direct sum of suspensions of ${}_{\mathscr{A}}\mathscr{A}$. Note that DG free $\mathscr{A}$-modules are not free objects in
the category of DG $\mathscr{A}$-modules. Let $Y$ be a graded set, we denote
$\mathscr{A}^{(Y)}$ as the DG free $\mathscr{A}$-module $\bigoplus\limits_{y\in Y}\mathscr{A} e_y$, where
$|e_y|=|y|$ and $\partial(e_y)=0$. A DG $\mathscr{A}$-module in $\mathscr{D}(\mathscr{A})$ is called ghost
projective if it is isomorphic in $\mathscr{D}(\mathscr{A})$ to a direct
summand of a certain DG free $\mathscr{A}$-module.

 Let $F$ be a DG $\mathscr{A}$-module and $E$ a subset of $F$. We say that $E$ is a semi-basis of $F$ and $F$ is a semi-free DG $\mathscr{A}$-module, if $E$ is a free basis of
$F^{\#}$ over $\mathscr{A}^{\#}$ and has a decomposition $E =
\bigsqcup_{i\ge0}E_i$ as a union of disjoint graded sets $E_u$ such
that
$$\partial(E_0)=0 \,\,\, \textrm {and} \,\,\,\partial(E_u)\subseteq \mathscr{A}
(\bigsqcup_{i<u}E_i)\, \,\,\textrm{for all}\,\, \,u >0.$$

Assume that $E$ is a semi-basis of a semi-free DG $\mathscr{A}$-module $F$ as above. Then we can obtain a sequence of DG $\mathscr{A}$-submodules $F(n)$, $n\in \mathbb{N}$, such that $$F(n)^{\#}=\bigoplus\limits_{i=0}^n\bigoplus\limits_{j\in E_i}\mathscr{A}^{\#}e_{i_j}.$$
Then
$$0=F(-1)\subset F(0)\subset\cdots\subset F(n)\subset\cdots$$
is a filtration of DG $\mathscr{A}$-submodules of $F$, such that $F = \bigcup\limits_{n=0}^{+\infty} \,F(n)$ and each $F(n)/F(n-1)$
is DG free on a basis of cocycles. We call the filtration above a semi-free filtration of $F$.  Conversely, if a DG $\mathscr{A}$-module $P$ admits a filtration of DG $\mathscr{A}$-submodules:
$$0=P(-1)\subset P(0)\subset P(1)\subset \cdots \subset P(i)\subset \cdots$$ such that $P=\bigcup\limits_{i=0}^{+\infty} \,P(i)$ and each $P(i)/P(i-1)$ is a DG free module on a cocycle basis $\Lambda_i=\{e_{j_i}|j\in I_i\}, i\in \mathbb{N}$. Then it is easy to check that $P$ is semi-free with $P^{\#}=\bigoplus\limits_{i=0}^{+\infty}\bigoplus\limits_{j\in I_i}\mathscr{A}^{\#}e_{j_i}$, and $\Lambda=\bigsqcup_{i\ge0}\Lambda_i$ is a semi-basis of $P$. Let $F$ be a semi-free DG $\mathscr{A}$-module. The DG free class of $F$ is
defined to be the number
$$ \inf\{n\in \Bbb{N}\cup \{0\}\,|\, F \,\text{admits a strictly
increasing semi-free filtration of length}\,\, n\}.$$ We denote it
by $\mathrm{DG free\,\, class}_{\mathscr{A}}F$.

\subsection{Morphisms between DG modules}
Let $M, N$ be two DG $\mathscr{A}$-modules. An $\mathscr{A}$-homomorphism $f:M \to N$
of degree $i$ is a $\k$-linear map of degree $i$ such that
\begin{align*}
f(am) = (-1)^{i\cdot|a|}af(m),\,\,\, \textrm{for all}\,\,\, a\in \mathscr{A},
m\in M.
\end{align*}
Denote $\Hom_{\mathscr{A}}(M,N)$ by the graded vector space of all graded
$\mathscr{A}$-homomorphisms from $M$ to $N$. This is a complex, with the
differential $\partial_{\Hom}$ defined as
\begin{align*}
\partial_{\Hom}(f)=\partial_{N}\circ f -
(-1)^{|f|}f\circ\partial_{M}
\end{align*}
for all $f\in \Hom_{\mathscr{A}}(M, N)$. A DG morphism $f:M \to N$ is an
$\mathscr{A}$-homomorphism of degree $0$ such that $\partial_{N}\circ f =
f\circ \partial_{M}$. The induced map $H(f)$ of $f$ on the
cohomologies is then a morphism of $H(\mathscr{A})$-modules.
A DG morphism $f: M\to N$ is called a quasi-isomorphism
(resp., ghost
morphism) if $H(f)$ is an isomorphism (resp., $H(f)=0$).
We write $f: M
\stackrel{\simeq}{\to}N$ if
$f$ is a
quasi-isomorphism.  A DG $\mathscr{A}$-module $M$
is called quasi-trivial if $M \simeq 0$.

\subsection{Resolutions of DG modules}
Let $P$ be a DG $\mathscr{A}$-module. If $\Hom_{\mathscr{A}}(P, -)$ preserves quasi-isomorphisms (resp. surjective quasi-isomorphisms), then $P$ is called homotopically projective (resp. semi-projective). If $\Hom_{\mathscr{A}}(P, -)$ turns surjective morphisms into surjective quasi-isomorphism, then $P$ is called categorically projective.
It is well known that any semi-projective (resp., categorically projective) DG $\mathscr{A}$-module is a direct summand of a semi-free (resp., categorically free) DG $\mathscr{A}$-module (cf. \cite{AFH}).

A semi-projective resolution
(resp., homotopically projective resolution) of a DG $\mathscr{A}$-module $M$ is a
quasi-isomorphism $\theta: P \to M$, where $P$ is a semi-projective
(resp., homotopically projective) DG $\mathscr{A}$-module.
A semi-free resolution of a DG $\mathscr{A}$-module $M$ is a
quasi-isomorphism $\varepsilon:F \to M$, where $F$ is a semi-free DG
$A$-module. Sometimes we call $F$ itself a semi-free resolution of
$M$. Every DG module has a semi-free resolution (\cite[Proposition
6.6]{FHT}). Similarly, a DG $\mathscr{A}$-module $I$ is called homotopically injective if
$\Hom_{\mathscr{A}}(-,I)$ preserves quasi-isomorphisms. A homotopically injective
resolution of a DG $\mathscr{A}$-module $M$ is a quasi-isomorphism $\eta:
M\to I$, where $I$ is a homotopically injective DG $\mathscr{A}$-module.  Every DG module
has a homotopically injective resolution (\cite[Theorem 3.7]{AFH}).

Two DG morphisms $\varphi, \psi: M \to N$ are called homotopic, if
there is an $\mathscr{A}$-homomorphism $h: M \to N$ of degree $-1$ such that
$\varphi - \psi = h \circ \partial_{M} + \partial_{N}\circ h$.
A DG morphism $\varphi: M\to N$ is called a homotopy
equivalence if there is a DG morphism $\psi: N \to M$ such that
$\varphi \circ \psi$ and $\psi \circ \varphi$ are homotopic to the
respective identity morphism.

\subsection{Cone of morphisms}
Let $f: M \to N$ be a DG morphism of DG $\mathscr{A}$-modules. The
mapping cone of $f$ is a DG $\mathscr{A}$-module $\mathrm{Cone}(f)$,
whose underlying graded module is $N^{\#} \oplus \Sigma M^{\#}  $.
And the differential of $\mathrm{Cone}(f)$ is defined by
$$\partial_{\mathrm{Cone}(f)}^i(n, \Sigma m) =
(f^{i+1}(m) + \partial ^i _N (n), -\Sigma \partial ^{i+1}_M (m)),$$
for all graded elements $(n, \Sigma m)$ of degree $i$, $i \in
\mathbb{Z}$. The mapping cone sequence
$$
0 \to   N \stackrel{\iota}{\to} \mathrm{Cone}(f) \stackrel{\pi}{\to}
\Sigma M\to 0
$$
where $\iota: n \mapsto\,(n,0)$ and $\pi: (n, \Sigma m) \mapsto
\Sigma m,$ is a linearly split short exact sequence of DG
$\mathscr{A}$-modules.

\subsection{Categories of DG modules}
The category of DG $\mathscr{A}$-modules is denoted by $\mathscr{C}(\mathscr{A})$ whose
morphisms are DG morphisms. We denote by $\mathscr{SF}(\mathscr{A})$ (resp. $\mathscr{SF}_{fg}(\mathscr{A})$) the full subcategory of $\mathscr{C}(\mathscr{A})$
whose objects are semi-free DG $\mathscr{A}$-modules (resp.
semi-free DG $\mathscr{A}$-modules with a finite semi-basis).
The homotopy category $\mathscr{K}(\mathscr{A})$ is the quotient category of
$\mathscr{C}(\mathscr{A})$, whose objects are the same as those of
$\mathscr{C}(A)$ and whose morphisms are the homotopic equivalence
classes of morphisms in $\mathscr{C}(\mathscr{A})$.
The derived category of DG $\mathscr{A}$-modules is denoted by
$\mathscr{D}(\mathscr{A})$, which is constructed from the category
$\mathscr{C}(\mathscr{A})$ by inverting quasi-isomorphisms
(\cite{Wei,KM}). A DG $\mathscr{A}$-module $M$ is compact if the functor $\Hom_{\mathscr{D}(\mathscr{A})}(M,-)$ preserves
all coproducts in $\mathscr{D}(\mathscr{A})$.
 By \cite[Proposition 3.3]{MW2},
a DG $\mathscr{A}$-module  is compact if and only if it admits a minimal semi-free resolution with a finite semi-basis. The full subcategory of $\mathscr{D}(\mathscr{A})$ consisting of compact DG $\mathscr{A}$-modules is denoted by $\mathscr{D}^c(\mathscr{A})$.

Let $\mathcal{C}$ be a subcategory or simply a set of some objects
of $\mathscr{D}(\mathscr{A})$. We denote by $\mathrm{smd}(\mathcal{C})$ the
minimal strictly full subcategory which contains $\mathcal{C}$ and
is closed under taking (possible) direct summands. And we write
$\overline{\mathrm{add}}(\mathcal{C})$ (resp.
$\mathrm{add}(\mathcal{C})$) as the intersection of all strict and
full subcategories of $\mathscr{D}(\mathscr{A})$ that contain $\mathcal{C}$ and
are closed under direct sums (resp. finite direct sums) and all
suspensions.

Let $\mathcal{S}$ and $\mathcal{T}$ be two strict and full
subcategories of $\mathscr{D}(\mathscr{A})$. We define $\mathcal{S}\star
\mathcal{T}$ as a full subcategory of $\mathscr{D}(\mathscr{A})$,  whose
objects are described as follows: $M\in \mathcal{S}\star
\mathcal{T}$ if and only if there is an exact triangle
$$ L\to M\to N\to \Sigma L, $$
where $L\in \mathcal{S}$ and $N\in \mathcal{T}$. For any strict and
full subcategory $\mathcal{R}$ of $\mathscr{D}(\mathscr{A})$, one has
$\mathcal{R}\star (\mathcal{S}\star \mathcal{T}) = (\mathcal{R}\star
\mathcal{S})\star \mathcal{T}$ (see \cite{BBD} or \cite[1.3.10]{BVDB}). Thus, the
following notation is unambiguous:
\begin{equation*}
\mathcal{T}^{\star n} =
\begin{cases} 0\quad&\text{for}\, n =0;\\
\mathcal{T}\quad & \text{for}\, n =1; \\
\overbrace{\mathcal{T}\star\cdots\star
\mathcal{T}}^{n\,\text{copies}} \quad&\text{for}\, n\ge 1.
\end{cases}
\end{equation*}
We refer to the objects of $\mathcal{T}^{\star n}$ as $(n-1)$-fold
extensions of objects from $\mathcal{T}$. Define $\mathcal{S}\diamond\mathcal{T} =
\mathrm{smd}(\mathcal{S}\star\mathcal{T})$. Inductively, we define
$$\langle\mathcal{S}\rangle_1 =
\mathrm{smd}(\mathrm{add}(\mathcal{S})), \text{and}\,
\langle\mathcal{S}\rangle_n =
\langle\mathcal{S}\rangle_{n-1}\diamond \langle\mathcal{S}\rangle_1,
n\ge 2.$$ Similarly,  we define
$\overline{\langle\mathcal{S}\rangle}_1 =
\mathrm{smd}(\overline{\mathrm{add}}(\mathcal{S})),
\overline{\langle\mathcal{S}\rangle}_n =
\overline{\langle\mathcal{S}\rangle}_{n-1}\diamond
\overline{\langle\mathcal{S}\rangle}_1, n\ge 2$. We have the
associativity of $\diamond$ and the formula
$$\mathcal{C}_1\diamond\mathcal{C}_2\diamond\cdots\diamond
\mathcal{C}_n
=\mathrm{smd}(\mathcal{C}_1\star\cdots\star\mathcal{C}_n)$$ (see
\cite[Section 2]{BVDB}). Denote $\langle\mathcal{S}\rangle=
\bigcup_{i\ge 0}\langle\mathcal{S}\rangle_i.$

\section{quillen-suslin theorem in dg version}
In this section, we will prove that any finitely generated semi-projective (resp., categorically projective) DG $\mathscr{A}$-module is semi-free (resp., categorically free). For this, we should recall several useful lemmas.
\begin{lem}\cite[Remark $20.1$]{FHT}\label{basic}
Let $A$ be a positively graded algebra over a commutative ring $R$ with $A^0=R$. Then any bounded below projective graded $A$-module is free.
\end{lem}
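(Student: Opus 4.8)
The plan is a graded Nakayama argument. Write $A^{+}=\bigoplus_{i\ge 1}A^{i}$ for the augmentation ideal; since $A^{0}=R$ it is a two-sided ideal with $A/A^{+}\cong R$, and for a graded $A$-module $N$ put $\bar N=N/A^{+}N$, a graded $R$-module. The assignment $N\mapsto\bar N$ is additive and sends $\Sigma^{d}A$ to $\Sigma^{d}R$. Two elementary facts will be used, each obtained by an easy induction on degree using boundedness below: (i) if $N$ is bounded below and $\bar N=0$, then $N=0$ (look at the lowest nonzero degree of $N$); and (ii) if $N$ is bounded below and $\{n_{\alpha}\}$ is a set of homogeneous elements whose images span $\bar N$ over $R$, then $\{n_{\alpha}\}$ generates $N$ as an $A$-module (induct on degree, noting that $(A^{+}N)^{i}$ only involves the $N^{j}$ with $j<i$).

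Now let $M$ be a bounded below projective graded $A$-module. Then $\bar M$ is a bounded below graded $R$-module; choose a homogeneous $R$-basis $\{\bar m_{\alpha}\}$ of it and lift it to homogeneous elements $m_{\alpha}\in M$. Let $F=\bigoplus_{\alpha}\Sigma^{|m_{\alpha}|}A$ and let $\varphi\colon F\to M$ be the degree-$0$ $A$-linear map with $e_{\alpha}\mapsto m_{\alpha}$. By fact (ii) the map $\varphi$ is surjective. Since $M$ is a projective object in the category of graded $A$-modules, $\varphi$ splits: $F=s(M)\oplus K$ with $K=\ker\varphi$ and $s$ a graded section. As $\bar M$ is bounded below, the exponents $|m_{\alpha}|$ are bounded below, so $F$, and hence its summand $K$, is bounded below.

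Applying $\bar{(-)}$ to $F=s(M)\oplus K$ gives $\bar F\cong\bar M\oplus\bar K$; on the other hand $\bar\varphi\colon\bar F=\bigoplus_{\alpha}\Sigma^{|m_{\alpha}|}R\to\bar M$ carries the basis $\{\bar e_{\alpha}\}$ bijectively onto the basis $\{\bar m_{\alpha}\}$, so $\bar\varphi$ is an isomorphism. Since $\bar\varphi$ vanishes on the summand $\bar K$ and is injective, $\bar K=0$, and therefore $K=0$ by fact (i). Hence $\varphi$ is an isomorphism and $M\cong\bigoplus_{\alpha}\Sigma^{|m_{\alpha}|}A$ is free.

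The only delicate point is the choice of a homogeneous $R$-basis of $\bar M$ in the second paragraph. Over a general commutative ring the lemma is false — taking $A=R$ concentrated in degree $0$, any non-free projective $R$-module is a counterexample — so $R$ must in fact be taken to be a field, which is exactly the setting in which the lemma is invoked here, with $A=\mathscr{A}^{\#}$ and $R=\mathscr{A}^{0}=\k$. In that case $\bar M$ is automatically free over $R$, with no Kaplansky-type or finiteness input required; everything else is the routine bounded-below bookkeeping packaged in (i) and (ii).
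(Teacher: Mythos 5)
The paper gives no argument for this lemma at all --- it is quoted verbatim from \cite[Remark 20.1]{FHT} --- so there is no in-paper proof to compare against. Your graded Nakayama argument is the standard proof and is correct as written: facts (i) and (ii) are exactly the bounded-below induction one needs, the surjection $\varphi$ splits by projectivity in the graded category, and $\bar K=0$ together with (i) kills the kernel. You are also right to flag the hypothesis on $R$: as literally stated (arbitrary commutative $R$) the lemma is false, e.g.\ a non-free projective $R$-module placed in degree $0$, or $P\otimes_R A$ for such a $P$, is a bounded below projective graded $A$-module that is not free. The statement does hold whenever $\bar M=M/A^{+}M$ is automatically $R$-free, which covers $R$ a field (the only case the paper uses, with $A=\mathscr{A}^{\#}$ and $R=\k$) and, via Kaplansky's theorem applied to the $R$-projective module $\bar M\cong R\otimes_A M$, also $R$ local. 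The one cosmetic point: with the paper's convention $(\Sigma^i M)^j=M^{j+i}$, the free module with generator in degree $|m_\alpha|$ is $\Sigma^{-|m_\alpha|}A$ rather than $\Sigma^{|m_\alpha|}A$, but this does not affect the argument.
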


\begin{lem}\cite[Lemma $2.8$]{Mao2}\label{toiso} Let $\mathscr{A}$ be a connected cochain DG algebra. Suppose that $F$ is a bounded below DG $\mathscr{A}$-module such that $\partial_F(F)\subseteq \frak{m}F$ and $F^{\#}$ is a projective $\mathscr{A}^{\#}$-module. If a DG morphism $\alpha: F\to F$ is homotopic to the identity morphism $\mathrm{id}_F$, then $\alpha$ is an isomorphism.
\end{lem}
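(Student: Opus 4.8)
\medskip
\noindent\emph{Proof proposal.}
The plan is to prove that $N:=\alpha-\mathrm{id}_F$ is locally nilpotent on $F$, and then to exhibit an inverse of $\alpha=\mathrm{id}_F+N$ as a (locally finite) Neumann series $\sum_{n\ge0}(-1)^nN^n$. First I would record that $N$ is an $\mathscr{A}$-homomorphism of degree $0$ which commutes with $\partial_F$, since both $\alpha$ and $\mathrm{id}_F$ are DG morphisms. The homotopy hypothesis gives an $\mathscr{A}$-homomorphism $h\colon F\to F$ of degree $-1$ with $N=\partial_F\circ h+h\circ\partial_F$. As $h$ is $\mathscr{A}$-linear one has $h(\frak{m}F)\subseteq\frak{m}F$, so the assumption $\partial_F(F)\subseteq\frak{m}F$ yields both $h\partial_F(F)\subseteq h(\frak{m}F)\subseteq\frak{m}F$ and $\partial_Fh(F)\subseteq\partial_F(F)\subseteq\frak{m}F$; hence $N(F)\subseteq\frak{m}F$.

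Next I would iterate. Since $N$ is $\mathscr{A}$-linear of degree $0$, $N(\frak{m}F)\subseteq\frak{m}\,N(F)\subseteq\frak{m}^2F$, and inductively $N^{n}(F)\subseteq\frak{m}^{n}F$ for every $n\ge1$. Now $\frak{m}=\mathscr{A}^{>0}$, so $\frak{m}^{n}\subseteq\mathscr{A}^{\ge n}$; together with the boundedness below of $F$ — say $F^{j}=0$ for $j<i_{0}$ — this forces $(\frak{m}^{n}F)^{d}=0$ whenever $n>d-i_{0}$. Consequently, for a homogeneous $x\in F$ of degree $d$ one gets $N^{n}(x)\in(\frak{m}^{n}F)^{d}=0$ for all $n>d-i_{0}$, i.e. $N$ is locally nilpotent. (Alternatively, since $F^{\#}$ is bounded below and projective over the connected graded algebra $\mathscr{A}^{\#}$, Lemma~\ref{basic} makes $F^{\#}$ free; choosing a homogeneous basis, $N$ sends each basis element to a combination of basis elements of strictly smaller degree, and one again concludes local nilpotence. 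The $\frak{m}$-adic version needs no basis, so I would use it.)

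Finally, set $\beta:=\sum_{n\ge0}(-1)^{n}N^{n}$. Local nilpotence makes this sum finite on each homogeneous component, so $\beta$ is a well-defined $\mathscr{A}$-homomorphism of degree $0$; it commutes with $\partial_F$ because each $N^{n}$ does; and the telescoping identities $\alpha\beta=\beta\alpha=\mathrm{id}_F$ hold, since they may be verified on homogeneous elements where all the sums involved are finite. Thus $\beta$ is a DG morphism and a two-sided inverse of $\alpha$, so $\alpha$ is an isomorphism of DG $\mathscr{A}$-modules. The one genuine point is establishing the local nilpotence of $N$: this is exactly where the two hypotheses interact, with $\partial_F(F)\subseteq\frak{m}F$ forcing each application of $N$ to raise the $\frak{m}$-adic order and $F$ bounded below making a sufficiently high $\frak{m}$-adic order vanish in any fixed degree; everything else is formal.
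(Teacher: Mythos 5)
Your proof is correct and follows what is essentially the standard argument for this lemma (the paper itself only cites it from \cite{Mao2} without reproducing a proof): write $\alpha=\mathrm{id}_F+N$ with $N=\partial_F\circ h+h\circ\partial_F$, deduce $N^{n}(F)\subseteq\frak{m}^{n}F$ from $\partial_F(F)\subseteq\frak{m}F$ together with the $\mathscr{A}$-linearity of $h$ and $N$, use boundedness below of $F$ to get local nilpotence, and invert $\alpha$ by the locally finite Neumann series. As you note, this route does not actually use the projectivity of $F^{\#}$, which only enters in the alternative basis argument you mention in passing.
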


\begin{lem}\cite[Lemma $2.11$]{Mao2}\label{useful} Let $\mathscr{A}$ be a connected cochain DG algebra. Assume that $\mathscr{A}$ is a DG free $\mathscr{A}$-module with a direct summand $P$ such that $H(P)$ is bounded below. Then $P$ is also a DG free $\mathscr{A}$-module.
\end{lem}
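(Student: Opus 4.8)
The plan is to exhibit $P$ as a DG free module by lifting a $\k$-basis of $\k\otimes_{\mathscr{A}}P$ to cocycles of $P$. Throughout I would write the ambient DG free module as $F=\bigoplus_{y\in Y}\mathscr{A}e_y$ with all $\partial_F(e_y)=0$, and fix a decomposition $F=P\oplus Q$ of DG $\mathscr{A}$-modules.

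First I would record the elementary structural facts. Since $\partial_{\mathscr{A}}(\mathscr{A})\subseteq\mathfrak{m}$, we have $\partial_F(F)\subseteq\mathfrak{m}F$, and as $P$ is a DG direct summand of $F$ this restricts to $\partial_P(P)\subseteq\mathfrak{m}P$. Being a direct summand of the semi-free module $F$, the module $P$ is semi-projective, hence semi-flat, so $\k\otimes_{\mathscr{A}}P$ computes $\k\otimes^{\mathbf{L}}_{\mathscr{A}}P$. Moreover, because each $e_y$ is a cocycle, the canonical map $Z(F)\to F/\mathfrak{m}F$ is surjective; splitting it along $F=P\oplus Q$ shows $Z(P)\to\k\otimes_{\mathscr{A}}P$ is surjective. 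With this I would prove $\k\otimes_{\mathscr{A}}P$ is bounded below: setting $N=\inf\{i\mid H^i(P)\neq 0\}$, for $i<N$ one has $Z^i(P)=B^i(P)=\partial_P(P^{i-1})\subseteq\mathfrak{m}P$, so the image of $Z^i(P)$ in $(\k\otimes_{\mathscr{A}}P)^i$ is zero, whence $(\k\otimes_{\mathscr{A}}P)^i=0$ by the surjectivity just noted.

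Next, choose a homogeneous $\k$-basis $\{\bar z_e\}_{e\in E}$ of $\k\otimes_{\mathscr{A}}P$ (so $|e|\ge N$), lift each $\bar z_e$ through $Z(P)\twoheadrightarrow\k\otimes_{\mathscr{A}}P$ to a cocycle $z_e\in Z^{|e|}(P)$, and form the DG free module $L=\bigoplus_{e\in E}\mathscr{A}\tilde e$ with $|\tilde e|=|e|$, $\partial\tilde e=0$, together with the DG morphism $\lambda\colon L\to P$, $\tilde e\mapsto z_e$; by construction $\k\otimes_{\mathscr{A}}\lambda$ is an isomorphism. I would then argue that $\lambda$ is a quasi-isomorphism: $\mathrm{Cone}(\lambda)$, being the cone of a morphism of semi-projective DG modules, is semi-projective, hence semi-flat, so $\k\otimes^{\mathbf{L}}_{\mathscr{A}}\mathrm{Cone}(\lambda)=\mathrm{Cone}(\k\otimes_{\mathscr{A}}\lambda)$ is acyclic; since $H(L)=H(\mathscr{A})^{(E)}$ is bounded below ($E$ is, and $H(\mathscr{A})$ is connected), $H(\mathrm{Cone}(\lambda))$ is bounded below, so the Nakayama lemma for DG modules over a connected cochain DG algebra forces $\mathrm{Cone}(\lambda)$ to be acyclic. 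As $L$ and $P$ are both semi-projective, $\lambda$ is then a homotopy equivalence; for a homotopy inverse $\mu$ the morphism $\mu\lambda\colon L\to L$ is homotopic to $\mathrm{id}_L$, and since $L$ is bounded below with $\partial_L(L)\subseteq\mathfrak{m}L$ and $L^{\#}$ free, Lemma~\ref{toiso} shows $\mu\lambda$ is an isomorphism. Hence $\lambda$ is a split monomorphism of DG modules, giving $P=\lambda(L)\oplus C$ with $C\cong\mathrm{coker}\,\lambda$ a DG submodule.

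It remains to kill $C$. It is acyclic (as $\lambda$ is a quasi-isomorphism), and, being a DG summand of $P$ and hence of $F$, it is semi-flat with $C^{\#}$ projective over $\mathscr{A}^{\#}$; furthermore $\k\otimes_{\mathscr{A}}C$ is a direct summand, as a complex, of $\k\otimes_{\mathscr{A}}F=\k^{(Y)}$, which has zero differential, so $\k\otimes_{\mathscr{A}}C$ has zero differential. On the other hand $\k\otimes_{\mathscr{A}}C=\k\otimes^{\mathbf{L}}_{\mathscr{A}}C\simeq 0$ by semi-flatness and acyclicity of $C$, so $\k\otimes_{\mathscr{A}}C=0$, i.e. $C^{\#}=\mathfrak{m}C^{\#}$. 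Realizing $C^{\#}$ as a direct summand of some graded free module $\mathscr{A}^{\#(Z)}$ and iterating, $C^{\#}\subseteq\bigcap_{k\ge 1}\mathfrak{m}^{k}\mathscr{A}^{\#(Z)}=0$, using that $\mathscr{A}^{\#}$ is connected (so $\mathfrak{m}^{k}\subseteq\mathscr{A}^{\ge k}$) and that every element of $\mathscr{A}^{\#(Z)}$ has finite support. Thus $C=0$ and $P=\lambda(L)\cong L$ is DG free. The two points I expect to be most delicate are establishing that $\k\otimes_{\mathscr{A}}P$ is bounded below — this is exactly what makes $L$ a genuine resolution rather than a formal device, and it rests on combining $\partial_P(P)\subseteq\mathfrak{m}P$ with the surjectivity $Z(P)\twoheadrightarrow\k\otimes_{\mathscr{A}}P$ inherited from $F$ — and the vanishing of the complementary summand $C$, where one must use both the zero differential on $\k\otimes_{\mathscr{A}}F$ and the $\mathfrak{m}$-adic separatedness of graded free $\mathscr{A}^{\#}$-modules; Lemma~\ref{toiso} is the tool that upgrades the homotopy equivalence $\lambda$ to a split monomorphism, which without it would only yield that $P$ is homotopy equivalent to a DG free module.
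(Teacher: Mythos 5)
The paper does not actually prove this lemma: it is imported verbatim from \cite[Lemma 2.11]{Mao2}, so there is no internal argument to compare yours against. Judged on its own, your proof is correct and self-contained up to two standard inputs that you should cite rather than assert: (i) a direct summand of a semi-free DG module is semi-projective, hence semi-flat (so $\k\otimes_{\mathscr{A}}(-)$ computes the derived tensor on $P$, $L$, $\mathrm{Cone}(\lambda)$ and $C$), and (ii) the Nakayama-type lemma that a DG module over a connected cochain DG algebra with bounded-below cohomology and acyclic derived reduction $\k\otimes^{\mathbf{L}}_{\mathscr{A}}(-)$ is itself acyclic (this is where the hypothesis that $H(P)$ is bounded below enters, and it is available in the literature the paper already cites, e.g.\ via minimal Eilenberg--Moore resolutions). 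The two steps you flag as delicate do go through: $\partial_F(F)\subseteq\frak{m}F$ descends to $P$ because $\frak{m}F\cap P=\frak{m}P$ inside $F=P\oplus Q$, the surjection $Z(F)\twoheadrightarrow F/\frak{m}F$ splits along the DG decomposition, and for $i<\inf H(P)$ one has $Z^i(P)=B^i(P)\subseteq\frak{m}P$, forcing $(\k\otimes_{\mathscr{A}}P)^i=0$; this is exactly what makes $L$ bounded below so that Lemma \ref{toiso} applies to $\mu\lambda$ and upgrades the homotopy equivalence to a split monomorphism. The elimination of the complement $C$ is also sound: $\k\otimes_{\mathscr{A}}C$ is an acyclic direct summand of the zero-differential complex $\k\otimes_{\mathscr{A}}F$, hence vanishes, and $C^{\#}=\frak{m}C^{\#}$ kills $C^{\#}$ by the finite-support/degree argument inside a graded free $\mathscr{A}^{\#}$-module (the naive graded Nakayama lemma would not apply directly since $F$ need not be bounded below). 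The degenerate case $H(P)=0$ is silently covered ($L=0$, $C=P$, and the last step gives $P=0$). In short: a valid proof, genuinely reconstructing a result the paper only quotes; tightening it would amount to supplying references for (i) and (ii).
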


\begin{lem}\cite[Lemma $3.4$]{Mao2} \label{cone} Let $F$ be a semi-free DG $\mathscr{A}$-module and let $F'$ be a semi-free DG submodule of $F$ such that $F/F'=\mathscr{A}\otimes V$ is DG free on a set of cocycles. Then there exsits a DG morphism $f:\mathscr{A}\otimes \Sigma^{-1}V\to F'$ such that $F=\mathrm{cone}(f)$.
\end{lem}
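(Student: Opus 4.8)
The plan is to read $f$ off from a graded splitting of the quotient map $F^{\#}\to (F/F')^{\#}$, and then to write down an explicit DG isomorphism $\mathrm{cone}(f)\cong F$. Fix a homogeneous basis $\{v_j\}_{j\in J}$ of the graded vector space $V$, so that the cocycles $1\otimes v_j$ form an $\mathscr{A}^{\#}$-basis of $(F/F')^{\#}=\mathscr{A}^{\#}\otimes V$. This graded module is free, hence projective, so the graded surjection $F^{\#}\to (F/F')^{\#}$ admits an $\mathscr{A}^{\#}$-linear section; I use it to choose homogeneous lifts $\widetilde{v}_j\in F^{|v_j|}$ of $1\otimes v_j$ with
$$F^{\#}=(F')^{\#}\oplus\bigoplus_{j\in J}\mathscr{A}^{\#}\widetilde{v}_j .$$
Since $1\otimes v_j$ is a cocycle of $F/F'$, the element $\partial_F(\widetilde{v}_j)$ maps to $0$ in $F/F'$, hence lies in $F'$; and $\partial_F^2=0$ forces $\partial_F(\widetilde{v}_j)$ to be a cocycle of $F'$, necessarily of degree $|v_j|+1$.

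Next I would define $f\colon \mathscr{A}\otimes\Sigma^{-1}V\to F'$ by $f(a\otimes\Sigma^{-1}v_j)=a\,\partial_F(\widetilde{v}_j)$, extended $\mathscr{A}$-linearly from the free generators $1\otimes\Sigma^{-1}v_j$. Since $|\Sigma^{-1}v_j|=|v_j|+1=|\partial_F(\widetilde{v}_j)|$, the map $f$ has degree $0$. And since $\Sigma^{-1}V$ carries the zero differential, so that $\partial_{\mathscr{A}\otimes\Sigma^{-1}V}=\partial_{\mathscr{A}}\otimes\mathrm{id}$, and each $\partial_F(\widetilde{v}_j)$ is a cocycle, the Leibniz rule gives $\partial_{F'}\bigl(f(a\otimes\Sigma^{-1}v_j)\bigr)=\partial_{\mathscr{A}}(a)\,\partial_F(\widetilde{v}_j)=f\bigl(\partial_{\mathscr{A}\otimes\Sigma^{-1}V}(a\otimes\Sigma^{-1}v_j)\bigr)$, so $f$ is a DG morphism.

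Finally I would define $\phi\colon\mathrm{cone}(f)\to F$ on the underlying graded module $(F')^{\#}\oplus\Sigma(\mathscr{A}\otimes\Sigma^{-1}V)^{\#}$ by $\phi(n,\Sigma(a\otimes\Sigma^{-1}v_j))=n+(-1)^{|a|}a\,\widetilde{v}_j$; this is the unique $\mathscr{A}^{\#}$-linear, degree $0$ extension of the rule sending the canonical generator $\Sigma(1\otimes\Sigma^{-1}v_j)$ to $\widetilde{v}_j$ and including $F'$ into $F$. The decomposition of $F^{\#}$ displayed above shows immediately that $\phi$ is bijective, hence an isomorphism of graded $\mathscr{A}^{\#}$-modules. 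To see it is a DG morphism I plug into the cone differential $\partial_{\mathrm{cone}(f)}(n,\Sigma m)=(f(m)+\partial_{F'}(n),-\Sigma\partial_{\mathscr{A}\otimes\Sigma^{-1}V}(m))$ and use $f(a\otimes\Sigma^{-1}v_j)=a\,\partial_F(\widetilde{v}_j)$ together with the Leibniz rule; both $\phi\circ\partial_{\mathrm{cone}(f)}$ and $\partial_F\circ\phi$ evaluated on $(n,\Sigma(a\otimes\Sigma^{-1}v_j))$ come out to $\partial_{F'}(n)+a\,\partial_F(\widetilde{v}_j)+(-1)^{|a|}\partial_{\mathscr{A}}(a)\widetilde{v}_j$. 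Hence $F\cong\mathrm{cone}(f)$.

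The argument has little content beyond these constructions; the only place that genuinely needs care is the Koszul-sign bookkeeping, where the sign coming from the suspension in the mapping-cone differential and the sign $(-1)^{|a|}$ in the $\mathscr{A}$-action on $\Sigma(\mathscr{A}\otimes\Sigma^{-1}V)$ must be cancelled exactly by the factor $(-1)^{|a|}$ built into $\phi$; this is what forces that factor to be there. I also note that the semi-freeness hypotheses on $F$ and $F'$ are not used beyond the single fact that $(F/F')^{\#}$ is $\mathscr{A}^{\#}$-free, which is precisely what makes the lifts $\widetilde{v}_j$ available.
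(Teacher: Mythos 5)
Your proof is correct, and it is the standard argument: the paper itself does not reprove this lemma (it is quoted from \cite{Mao2}), but your formula $f(1\otimes\Sigma^{-1}v_j)=\partial_F(\widetilde{v}_j)$ is exactly the construction the authors invoke later when they set $\beta(\Sigma^{-1}\varepsilon_{\omega})=\partial_P(\varepsilon_{\omega})$ and appeal to ``the proof of Lemma \ref{cone}'' to get $P(1)=\mathrm{cone}(\beta)$. Your sign bookkeeping for $\phi$ checks out against the paper's conventions $\partial_{\Sigma M}(\Sigma m)=-\Sigma\partial_M(m)$ and $a(\Sigma m)=(-1)^{|a|}\Sigma(am)$, and your closing observation that only the graded-freeness of $(F/F')^{\#}$ is really needed is accurate.
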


\begin{lem}\label{heq}\cite[Proposition $6.4$(iii)]{FHT}
Assume that $M$ and $N$ are semi-free DG $\mathscr{A}$-modules and $f: M\to N$  is a quasi-isomorphism. Then $f$ is a homotopic equivalence.
\end{lem}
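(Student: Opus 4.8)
The plan is to show that $\mathrm{Cone}(f)$ is a quasi-trivial semi-free DG $\mathscr{A}$-module, hence contractible, and then to read off a homotopy inverse of $f$. So first I would form $C:=\mathrm{Cone}(f)$, whose underlying graded module is $N^{\#}\oplus\Sigma M^{\#}$ with differential $\partial_C(n,\Sigma m)=(f(m)+\partial_N(n),\,-\Sigma\partial_M(m))$. If $E_M=\bigsqcup_{i\ge 0}E^M_i$ and $E_N=\bigsqcup_{i\ge 0}E^N_i$ are semi-bases of $M$ and $N$, then $E_N\sqcup\Sigma E_M$ is a free basis of $C^{\#}$ over $\mathscr{A}^{\#}$; listing the layers of $E_N$ first and then the shifted layers $\Sigma E^M_i$, the formula $\partial_C(\Sigma e)=f(e)-\Sigma\partial_M(e)$ for $e\in E^M_i$ shows that this decomposition satisfies the triangularity condition defining a semi-basis, so $C$ is semi-free. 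The linearly split mapping cone sequence $0\to N\xrightarrow{\iota}C\xrightarrow{\pi}\Sigma M\to 0$ and its long exact cohomology sequence then force $H(C)=0$, since $H(f)$ is an isomorphism.

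Next I would prove that a quasi-trivial semi-free DG $\mathscr{A}$-module $C$ is contractible. A semi-free DG module is homotopically projective, so $\Hom_{\mathscr{A}}(C,-)$ preserves quasi-isomorphisms; applying this to the quasi-isomorphism $C\to 0$ gives $H(\Hom_{\mathscr{A}}(C,C))=0$. Hence the degree-zero cocycle $\mathrm{id}_C$ is a coboundary in $\Hom_{\mathscr{A}}(C,C)$, i.e.\ there is a homomorphism $h$ of degree $-1$ with $\mathrm{id}_C=\partial_C\circ h+h\circ\partial_C$, so $\mathrm{id}_C\simeq 0$ and $C\simeq 0$. If one wishes to avoid invoking homotopical projectivity of semi-free modules, the same conclusion can instead be obtained by induction along a semi-free filtration of $C$ via Lemma \ref{cone}: at each stage one splits off a contractible DG free summand on a cocycle basis of an acyclic complex and patches the partial contracting homotopies, using that the filtration is increasing and exhaustive.

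Finally, in the homotopy category $\mathscr{K}(\mathscr{A})$ the mapping cone sequence becomes a distinguished triangle $M\xrightarrow{f}N\to C\to\Sigma M$ with $C\cong 0$, so the triangulated-category axioms make $f$ an isomorphism in $\mathscr{K}(\mathscr{A})$; this is exactly to say that there is a DG morphism $g:N\to M$ with $gf\simeq\mathrm{id}_M$ and $fg\simeq\mathrm{id}_N$, i.e.\ $f$ is a homotopy equivalence. (One could also bypass the cone: both $M$ and $N$ are homotopically projective, so the two maps $\Hom_{\mathscr{A}}(N,M)\to\Hom_{\mathscr{A}}(N,N)$ and $\Hom_{\mathscr{A}}(M,M)\to\Hom_{\mathscr{A}}(M,N)$ induced by composition with $f$ are quasi-isomorphisms; surjectivity in degree-zero cohomology of the first gives $g$ with $fg\simeq\mathrm{id}_N$, and injectivity of the second, applied to $f\circ(gf)\simeq f\circ\mathrm{id}_M$, gives $gf\simeq\mathrm{id}_M$.) I expect the only point needing genuine care to be the verification in the first step that the concatenated semi-bases really satisfy the triangularity condition; everything afterwards is formal, given the standard fact that semi-free DG modules are homotopically projective.
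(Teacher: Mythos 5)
This lemma is quoted from \cite[Proposition 6.4(iii)]{FHT} and the paper supplies no proof of its own, so there is nothing internal to compare against; measured against the standard argument in [FHT], your parenthetical ``bypass'' is exactly that proof: since $M$ and $N$ are homotopically projective, $\Hom_{\mathscr{A}}(N,f)$ and $\Hom_{\mathscr{A}}(M,f)$ are quasi-isomorphisms, surjectivity on $H^0$ of the first produces $g$ with $fg\simeq\mathrm{id}_N$, and injectivity on $H^0$ of the second upgrades $gf\simeq\mathrm{id}_M$. That argument is complete (granting the standard fact that semi-free implies homotopically projective, which is part (ii) of the same FHT proposition), and I would make it the main line rather than the fallback.

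Your primary route through the cone has one genuine gap, which you half-anticipate: the claim that ``listing the layers of $E_N$ first and then the shifted layers $\Sigma E^M_i$'' yields a semi-basis of $C=\mathrm{Cone}(f)$. A semi-basis must be filtered by $\mathbb{N}$, so if $E_N$ has infinitely many nonempty layers you cannot place all of them below the first layer of $\Sigma E_M$; and the naive interleaving $E'_i=E^N_i\sqcup\Sigma E^M_{i-1}$ fails because $\partial_C(\Sigma e)=(f(e),-\Sigma\partial_M(e))$ and $f(e)$ may involve basis elements of $N$ of arbitrarily high filtration level. The fix is to assign levels element by element: each $f(e)$ is a finite $\mathscr{A}$-linear combination of elements of $E_N$, so one can recursively set the level of $\Sigma e$ to exceed both the levels of the finitely many $E_N$-elements occurring in $f(e)$ and the levels already assigned to the $\Sigma e'$ occurring in $\Sigma\partial_M(e)$. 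With that repair the rest of your cone argument is sound: $H(C)=0$ from the long exact sequence, $\mathrm{id}_C$ is a coboundary in $\Hom_{\mathscr{A}}(C,C)$ because $C$ is homotopically projective and $C\to 0$ is a quasi-isomorphism, and a distinguished triangle in $\mathscr{K}(\mathscr{A})$ with contractible cone forces $f$ to be an isomorphism there. Note, though, that this route still invokes homotopical projectivity of semi-free modules at the contractibility step, so it is not actually more elementary than the direct argument.
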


\begin{thm}\label{semiqst}
 Let $\mathscr{A}$ be a connected cochain DG algebra and $P$ a DG $\mathscr{A}$-module such that its underlying graded module $P^{\#}$ is a finitely generated $\mathscr{A}^{\#}$-module. Then $P$ is semi-free if it is semi-projective.
\end{thm}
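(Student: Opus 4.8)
\emph{Proof proposal.} The plan is to reduce first to the case in which $P^{\#}$ is free, then to use semi-projectivity to produce a \emph{finite} minimal semi-free resolution $\theta\colon F\to P$, to split off from $P$ a contractible direct summand $N$ with $N^{\#}$ free, and finally to check that such an $N$ is itself semi-free. Concretely: since a semi-projective DG module is a direct summand of a semi-free one, write $P\oplus Q\cong G$ with $G$ semi-free; then $P^{\#}$ is a direct summand of the graded free module $G^{\#}$, hence a finitely generated projective $\mathscr{A}^{\#}$-module, and it is bounded below because $\mathscr{A}^{\#}$ is connected graded, so $P^{\#}$ is finitely generated \emph{free} over $\mathscr{A}^{\#}$ by Lemma \ref{basic}. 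Consequently $\Hom_{\mathscr{A}}(P,-)$ commutes with coproducts of complexes; as $P$ is semi-projective, hence homotopically projective (so $\Hom_{\mathscr{D}(\mathscr{A})}(P,-)\cong H^0\Hom_{\mathscr{A}}(P,-)$), the functor $\Hom_{\mathscr{D}(\mathscr{A})}(P,-)$ commutes with coproducts, i.e.\ $P$ is compact. By \cite[Proposition 3.3]{MW2}, $P$ then admits a minimal semi-free resolution $\theta\colon F\to P$ with $F$ of finite semi-basis; in particular $F$ is bounded below, $F^{\#}$ is free, and $\partial_F(F)\subseteq\frak{m}F$.

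Next I would prove the splitting $P\cong F\oplus N$. Since $F$ is semi-free and $P$ is semi-projective, both are homotopically projective, so the quasi-isomorphism $\theta$ is a homotopy equivalence (the argument behind Lemma \ref{heq}, applied after realizing $P$ as a summand of a semi-free module). Choosing a homotopy inverse $\sigma\colon P\to F$ we get $\sigma\theta\simeq\mathrm{id}_F$, so Lemma \ref{toiso} applied to the endomorphism $\sigma\theta$ of $F$ shows $\sigma\theta$ is an isomorphism. Hence $\theta$ is a split monomorphism with retraction $\rho=(\sigma\theta)^{-1}\sigma$, giving $P\cong F\oplus N$ with $N=\ker\rho$, in such a way that $\theta$ becomes the inclusion of $F$. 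Because $\theta$ is a quasi-isomorphism we get $H(N)=0$; and $N$, being a direct summand of $P$, is semi-projective with $N^{\#}$ a finitely generated projective, hence (again by Lemma \ref{basic}) free, $\mathscr{A}^{\#}$-module. An acyclic homotopically projective DG module is contractible, so $N$ is contractible.

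It remains to show that a contractible DG module $N$ with $N^{\#}$ finitely generated free is semi-free; since a direct sum of semi-free DG modules is semi-free (take the union of the semi-bases with the evident filtration), this finishes the proof. Tensoring a contracting homotopy of $N$ with $\k$ over $\mathscr{A}$ produces a contracting homotopy of the finite-dimensional complex $\k\otimes_{\mathscr{A}}N\cong(N/\frak{m}N,\bar\partial)$, so the latter is acyclic. Pick homogeneous $b_1,\dots,b_m\in N/\frak{m}N$ whose classes form a basis of $(N/\frak{m}N)/\ker\bar\partial$; by acyclicity, $\{b_i\}\cup\{\bar\partial b_i\}$ is a $\k$-basis of $N/\frak{m}N$. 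Lift each $b_i$ to $\tilde b_i\in N$ and set $c_i=\partial_N(\tilde b_i)$; then $\{\tilde b_i\}\cup\{c_i\}$ reduces modulo $\frak{m}N$ to that basis, so by graded Nakayama it is a free basis of $N^{\#}$ over $\mathscr{A}^{\#}$. Since $\partial_N(c_i)=0$ and $\partial_N(\tilde b_i)=c_i$, taking $E_0=\{c_1,\dots,c_m\}$ and $E_1=\{\tilde b_1,\dots,\tilde b_m\}$ exhibits $N$ as semi-free, whence $P\cong F\oplus N$ is semi-free.

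I expect the main obstacle to be this last part together with the reason it is unavoidable: because $P$ is merely semi-projective one may have $\partial_P(P)\not\subseteq\frak{m}P$, so $P$ is not itself minimal and one cannot simply reorder a homogeneous basis of $P^{\#}$ into a semi-basis. The genuine work is therefore (a) verifying that $P$ is compact, so that a \emph{finite} minimal semi-free resolution exists, and (b) controlling the contractible complement $N$ --- where the freeness of $N^{\#}$ together with the acyclicity of the induced differential on $N/\frak{m}N$ is exactly what makes the explicit two-step semi-basis possible.
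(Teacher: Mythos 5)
Your proof is correct, but it takes a genuinely different route from the paper's. The paper never leaves the ambient semi-free module: writing $G=P\oplus Q$ with $G$ semi-free, it decomposes each element of a semi-basis of $G$ into its $P$- and $Q$-components, projects the semi-free filtration of $G$ onto $P$, and uses Lemma \ref{useful} to see that each subquotient of the projected filtration is DG free (or zero); this works uniformly whether the filtration of $G$ is finite or infinite and needs neither minimality nor compactness. You instead observe that $P^{\#}$ finitely generated free plus homotopical projectivity forces $P$ to be compact, invoke \cite[Proposition 3.3]{MW2} to get a \emph{minimal} finite semi-free resolution $F\to P$, split it off via Lemma \ref{toiso}, and identify the complement $N$ as a contractible DG module with finitely generated free underlying module, which your explicit Nakayama/lifting argument exhibits as categorically free with the two-step semi-basis $E_0=\{c_i\}$, $E_1=\{\tilde b_i\}$. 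All the steps check out: semi-projective does imply homotopically projective (so the homotopy-equivalence and compactness claims are legitimate), $\frak m N$ is a DG submodule so the contracting homotopy descends, and the splitting $P\cong \mathrm{im}(\theta)\oplus\ker\rho$ is an honest DG-module decomposition. What each approach buys: the paper's filtration argument directly controls $\mathrm{DGfree.class}_{\mathscr{A}}P$ by that of $G$, which is exactly what is reused later in Proposition \ref{clghlen}; your argument yields the sharper structural statement $P\cong F_{\min}\oplus(\text{categorically free})$, from which the categorically projective case (Theorem 2.9) also drops out immediately (there $F_{\min}=0$), at the cost of leaning on the compactness characterization of minimal resolutions and giving only the weaker bound $\mathrm{DGfree.class}_{\mathscr{A}}P\le\max(\mathrm{DGfree.class}_{\mathscr{A}}F_{\min},1)$.
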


\begin{proof}
Since $P$ is semi-projective,  it is a direct summand of a semi-free DG $\mathscr{A}$-module $F$. We have $F=P\oplus Q$, for some DG $\mathscr{A}$-module $Q$. Then $F^{\#}=P^{\#}\oplus Q^{\#}$ as a graded $\mathscr{A}^{\#}$-module. It implies that $P^{\#}$ is a projective $\mathscr{A}^{\#}$-module. Since $P^{\#}$ is finitely generated, it is bounded below. By Lemma \ref{basic}, $P^{\#}$ is a free $\mathscr{A}^{\#}$-module.

If $\mathrm{DGfree.class}_{\mathscr{A}}F=t$ is finite,  then
$F$ admits a strictly increasing semi-free filtration:
$$0=F(-1)\subset F(0)\subset F(1)\subset  \cdots \subset F(t-1)\subset
F(t)=F, $$
where each quotient $F(i)/F(i-1)$ is a DG free $\mathscr{A}$-module.  Let $E_i=\{e_{i_j}|j\in I_i\}$ be a DG free basis of $F_i=F(i)/F(i-1), 0\le i\le t$. We have $\partial_F(e_{i_j})\subseteq \mathscr{A}(\bigsqcup\limits_{j=0}^{i-1}E_j)$.
 Then each graded free $\mathscr{A}^{\#}$-module $F(r)^{\#}$ can be decomposed as
 $$\bigoplus\limits_{i=0}^r\bigoplus\limits_{j\in I_i}\mathscr{A}^{\#}e_{i_j}, 0\le r\le t.$$
Let $e_{i_j}=p_{i_j}+q_{i_j}$, where $p_{i_j}\in P$ and $q_{i_j}\in Q$, for any $j\in I_i, i=0,1,\cdots,t$. We have
$$F(r)=[\sum\limits_{i=0}^r(\sum\limits_{j\in I_i}\mathscr{A}p_{i_j})]\oplus [\sum\limits_{i=0}^r(\sum\limits_{j\in I_i}\mathscr{A}q_{i_j})], r=0,1,\cdots, t.$$
Hence $$F_r=F(r)/F(r-1)=(\sum\limits_{j\in I_r}\mathscr{A}\overline{p_{i_j}})\oplus(\sum\limits_{j\in I_r}\mathscr{A}\overline{q_{r_j}}), r=0, 1,\cdots, t.$$
By Lemma \ref{useful}, each $\sum\limits_{j\in I_r}\mathscr{A}\overline{p_{i_j}}$ is either a zero module or a DG free $\mathscr{A}$-module.
Let $\omega_{r_{\lambda}},\lambda\in \Lambda_r$ be its DG free basis ($\Lambda_r=\emptyset$ and $\omega_{r_{\lambda}}=0$ if $\sum\limits_{j\in I_r}\mathscr{A}\overline{p_{r_j}}=0$). Then $$\sum\limits_{j\in I_r}\mathscr{A}\overline{p_{r_j}}=\bigoplus\limits_{j\in \Lambda_r}\mathscr{A}\omega_{r_j}.$$ Note that
$$\bigoplus\limits_{i\in I_r}\mathscr{A}^{\#}e_{r_i}=\bigoplus\limits_{\lambda\in \Lambda_r}\mathscr{A}^{\#}\omega_{r_{\lambda}}\oplus (\sum\limits_{j\in I_r}\mathscr{A}\overline{q_{r_j}})^{\#}$$ is a graded $\mathscr{A}^{\#}$-submodule of $F^{\#}$. So $\bigoplus\limits_{\lambda\in \Lambda_r}\mathscr{A}^{\#}\omega_{r_{\lambda}}$ is also a graded $\mathscr{A}^{\#}$-submodule of $F^{\#}$. Since $\partial_{F_r}(\omega_{r_{\lambda}})=0$, we have
$$\partial_F(\omega_{r_{\lambda}})\in F(r-1)\cap \sum\limits_{i=0}^r(\sum\limits_{j\in I_i}\mathscr{A}p_{i_j})=\sum\limits_{i=0}^{r-1}(\sum\limits_{j\in I_i}\mathscr{A}p_{i_j}),   r=0,1,\cdots, t.$$
Let $P(r)=\sum\limits_{i=0}^r(\sum\limits_{j\in I_i}\mathscr{A}p_{i_j}), r=0,1,\cdots,t$. Then
$$0\subseteq P(0)\subseteq P(1)\subseteq P(2)\subseteq \cdots \subseteq P(t-1)\subseteq P(t)=P$$ is a filtration of DG $\mathscr{A}$-submodules of $P$. Moreover, $P(r)/P(r-1)=\sum\limits_{j\in I_r}\mathscr{A}\overline{p_{r_j}}$ is either zero or a DG free $\mathscr{A}$-module $\bigoplus\limits_{j\in \Lambda_r}\mathscr{A}\omega_{r_j}$. If $P(r)/P(r-1)=0$, for some $r\in \mathbb{N}$, then we just cancel such $P(r)$. In this way, we can get a strictly increasing semi-free filtration of $P$. So $P$ is semi-free in this case.

If $\mathrm{DGfree.class}_{\mathscr{A}}F = +\infty$, then
$F$ has a strictly increasing semi-free filtration:
$$0=F(-1)\subset F(0)\subset F(1)\subset  \cdots \subset F(i)\subset
\cdots, $$
such that $F=\bigcup\limits_{i=0}^{+\infty}F(i)$ and each quotient $F(i)/F(i-1)$ is a DG free $\mathscr{A}$-module. Similarly as above, let $E_i=\{e_{i_j}|j\in I_i\}$ be a free basis of $F_i=F(i)/F(i-1), i\in \mathbb{N}$. We have $\partial_F(e_{i_j})\subseteq \mathscr{A}(\bigsqcup\limits_{j=0}^{i-1}E_j)$.
 Then each graded free $\mathscr{A}^{\#}$-module $F(r)^{\#}$ can be decomposed as
 $$\bigoplus\limits_{i=0}^r\bigoplus\limits_{j\in I_i}\mathscr{A}^{\#}e_{i_j},  r\in \mathbb{N}.$$
 Let $e_{i_j}=p_{i_j}+q_{i_j}$, where $p_{i_j}\in P$ and $q_{i_j}\in Q$, for any $j\in I_i, i\in \mathbb{N}$. We have
$$F(r)=[\sum\limits_{i=0}^r(\sum\limits_{j\in I_i}\mathscr{A}p_{i_j})]\oplus [\sum\limits_{i=0}^r(\sum\limits_{j\in I_i}\mathscr{A}q_{i_j})], r\in \mathbb{N}.$$
Hence $$F_r=F(r)/F(r-1)=(\sum\limits_{j\in I_r}\mathscr{A}\overline{p_{i_j}})\oplus(\sum\limits_{j\in I_r}\mathscr{A}\overline{q_{r_j}}), r\in \mathbb{N}.$$
By Lemma \ref{useful}, each $\sum\limits_{j\in I_r}\mathscr{A}\overline{p_{i_j}}$ is either a zero module or a DG free $\mathscr{A}$-module.
Let $\omega_{r_{\lambda}},\lambda\in \Lambda_r$ be its DG free basis ($\Lambda_r=\emptyset$ and $\omega_{r_{\lambda}}=0$ if $\sum\limits_{j\in I_r}\mathscr{A}\overline{p_{r_j}}=0$). Then $$\sum\limits_{j\in I_r}\mathscr{A}\overline{p_{r_j}}=\bigoplus\limits_{j\in \Lambda_r}\mathscr{A}\omega_{r_j}.$$ Note that
$$\bigoplus\limits_{i\in I_r}\mathscr{A}^{\#}e_{r_i}=\bigoplus\limits_{\lambda\in \Lambda_r}\mathscr{A}^{\#}\omega_{r_{\lambda}}\oplus (\sum\limits_{j\in I_r}\mathscr{A}\overline{q_{r_j}})^{\#}$$ is a graded $\mathscr{A}^{\#}$-submodule of $F^{\#}$. So $\bigoplus\limits_{\lambda\in \Lambda_r}\mathscr{A}^{\#}\omega_{r_{\lambda}}$ is also a graded $\mathscr{A}^{\#}$-submodule of $F^{\#}$. Since $\partial_{F_r}(\omega_{r_{\lambda}})=0$, we have
$$\partial_F(\omega_{r_{\lambda}})\in F(r-1)\cap \sum\limits_{i=0}^r(\sum\limits_{j\in I_i}\mathscr{A}p_{i_j})=\sum\limits_{i=0}^{r-1}(\sum\limits_{j\in I_i}\mathscr{A}p_{i_j}),   r\in \mathbb{N}.$$
Let $P(r)=\sum\limits_{i=0}^r(\sum\limits_{j\in I_i}\mathscr{A}p_{i_j}), r\in \mathbb{N}$. Then we get the following increasing sequence:
$$0\subseteq P(0)\subseteq P(1)\subseteq P(2)\subseteq \cdots \subseteq P(i)\subseteq \cdots $$
of DG $\mathscr{A}$-submodules of $P$.
Moreover, $P(r)/P(r-1)=\sum\limits_{j\in I_r}\mathscr{A}\overline{p_{r_j}}$ is either zero or a DG free $\mathscr{A}$-module $\bigoplus\limits_{j\in \Lambda_r}\mathscr{A}\omega_{r_j}$. If $P(r)/P(r-1)=0$, for some $r\in \mathbb{N}$, then we just cancel such $P(r)$.
Note that
\begin{align*}
\bigcup\limits_{r=0}^{+\infty}P(r)^{\#}&=\sum\limits_{i=0}^{+\infty}\sum\limits_{j\in I_i}\mathscr{A}^{\#}p_{ij}=\bigoplus\limits_{r=0}^{+\infty}\bigoplus\limits_{\lambda\in \Lambda_r}\mathscr{A}^{\#}\omega_{r_{\lambda}}\\
&= \frac{\bigoplus\limits_{r=0}^{+\infty}\bigoplus\limits_{i\in I_r}\mathscr{A}^{\#}e_{r_i}}{\sum\limits_{i=0}^{+\infty}\sum\limits_{j\in I_i}\mathscr{A}^{\#}q_{ij}} \supseteq \frac{F^{\#}}{Q^{\#}}=P^{\#}.\\
\end{align*}
Then $\bigcup\limits_{r=0}^{+\infty}P(r)=P$ since $\bigcup\limits_{r=0}^{+\infty}P(r)\subseteq P$ is obviously.
So $P$ has a strictly increasing semi-free filtration and it is also semi-free.
\end{proof}
\begin{rem}{\rm
By \cite[Lemma $3.2$]{MW1}, the triangulated category $\mathscr{K}(\mathscr{SF}_{fg}(\mathscr{A}))$ is equivalent to
$\mathscr{D}^c(\mathscr{A})$, which is the smallest
triangulated thick subcategory of $\mathscr{D}(\mathscr{A})$ containing ${}_{\mathscr{A}}\mathscr{A}$
(see \cite[Theorem 5.3]{Kel}). Hence $\mathscr{K}(\mathscr{SF}_{fg}(\mathscr{A}))$ is closed under direct summands.
 Theorem \ref{semiqst} indicates that the category $\mathscr{SF}_{fg}(\mathscr{A})$ is also closed under direct summands. }
\end{rem}

\begin{cor}
Let $\mathscr{A}$ be a connected cochain DG algebra and $P$ a DG $\mathscr{A}^e$-module such that its underlying graded module $P^{\#}$ is a finitely generated $\mathscr{A}^{e\#}$-module. Then $P$ is semi-free if it is semi-projective.
\end{cor}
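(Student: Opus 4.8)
The plan is to reduce the statement for the enveloping DG algebra $\mathscr{A}^e = \mathscr{A}\otimes\mathscr{A}\!^{op}$ to the already-established Theorem~\ref{semiqst}. For this, the key observation recorded in the excerpt is that $\mathscr{A}^e$ is itself a connected cochain DG algebra: its underlying graded algebra $(\mathscr{A}^e)^{\#} = \mathscr{A}^{\#}\otimes(\mathscr{A}^{\#})^{op}$ is connected graded (being the tensor product of two connected graded algebras over the field $\k$), and its differential satisfies the Leibniz rule. So the hypotheses of Theorem~\ref{semiqst} are met with $\mathscr{A}$ there replaced by $\mathscr{A}^e$.

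With that in hand, the argument is essentially immediate. First I would note that the notions appearing in the statement --- DG $\mathscr{A}^e$-module, finitely generated underlying graded $(\mathscr{A}^e)^{\#}$-module, semi-free, semi-projective --- are exactly the notions defined in Section~1 and used in Theorem~\ref{semiqst}, now interpreted over the connected cochain DG algebra $\mathscr{A}^e$. Indeed, a DG $\mathscr{A}^e$-module $P$ is by definition just a DG module over the connected cochain DG algebra $\mathscr{B} := \mathscr{A}^e$; the assumption that $P^{\#}$ is finitely generated over $(\mathscr{A}^e)^{\#} = \mathscr{B}^{\#}$ is precisely the finiteness hypothesis of Theorem~\ref{semiqst}; and $P$ being semi-projective as a DG $\mathscr{A}^e$-module means $\Hom_{\mathscr{B}}(P,-)$ preserves surjective quasi-isomorphisms, again exactly the hypothesis there. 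Applying Theorem~\ref{semiqst} to the connected cochain DG algebra $\mathscr{B} = \mathscr{A}^e$ and the DG $\mathscr{B}$-module $P$ then yields that $P$ is semi-free over $\mathscr{A}^e$, which is the desired conclusion.

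I do not anticipate a genuine obstacle here: the corollary is a specialization of the main theorem to a particular class of connected cochain DG algebras, and the only thing requiring verification is that $\mathscr{A}^e$ falls within the scope of Theorem~\ref{semiqst}, which is already spelled out in Section~1 where it is observed that $\mathscr{A}^e$ is a connected cochain DG algebra with $H(\mathscr{A}^e)\cong H(\mathscr{A})^e$. The only point one should perhaps make explicit is that the standing blanket assumption ``$\mathscr{A}$ will always be a connected cochain DG algebra'' is being temporarily applied with $\mathscr{A}$ replaced by $\mathscr{A}^e$; since all the lemmas and Theorem~\ref{semiqst} are stated for an arbitrary connected cochain DG algebra, this substitution is legitimate. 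Thus the proof amounts to one sentence: $\mathscr{A}^e$ is a connected cochain DG algebra, so Theorem~\ref{semiqst} applies verbatim.
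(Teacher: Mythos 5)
Your proposal is correct and matches the paper's intent: the paper offers no separate proof of this corollary, treating it as an immediate specialization of Theorem~\ref{semiqst} to the connected cochain DG algebra $\mathscr{A}^e$, which is exactly the reduction you carry out. Your explicit verification that $\mathscr{A}^e$ falls within the scope of the theorem is the only point needing attention, and you handle it correctly.
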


\begin{thm}
Let $\mathscr{A}$ be a connected cochain DG algebra and $P$ a DG $\mathscr{A}$-module such that its underlying graded module $P^{\#}$ is a finitely generated $\mathscr{A}^{\#}$-module. Then $P$ is a categorically free DG $\mathscr{A}$-module if it is categorically projective.
\end{thm}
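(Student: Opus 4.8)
My plan is to combine Theorem \ref{semiqst} with an analysis of the linearization $\k \otimes_{\mathscr{A}} P$. Since $P$ is categorically projective, it is both semi-projective and quasi-trivial (this is part of the content of the diagrams in the introduction). As $P^{\#}$ is a finitely generated $\mathscr{A}^{\#}$-module, Theorem \ref{semiqst} shows that $P$ is semi-free, so $P^{\#}$ is a finitely generated free $\mathscr{A}^{\#}$-module. Moreover $H(P) = 0$, so the quasi-isomorphism $P \to 0$ between the semi-free DG $\mathscr{A}$-modules $P$ and $0$ is, by Lemma \ref{heq}, a homotopy equivalence; hence $P$ is contractible, i.e. $\mathrm{id}_{P}$ is null-homotopic.

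Next I would pass to $\overline{P} := P / \frak{m} P = \k \otimes_{\mathscr{A}} P$. Any null-homotopy of $\mathrm{id}_{P}$ is $\mathscr{A}$-linear, hence maps $\frak{m} P$ into $\frak{m} P$, so it descends to a null-homotopy of $\mathrm{id}_{\overline{P}}$; thus $\overline{P}$ is a contractible complex of $\k$-vector spaces, and it is finite-dimensional because $P^{\#}$ is finitely generated free over the connected graded algebra $\mathscr{A}^{\#}$. A finite-dimensional acyclic complex of $\k$-vector spaces admits a homogeneous basis of the form $\{\overline{y}_{i}\}_{i \in I} \sqcup \{\overline{z}_{i}\}_{i \in I}$ with $\overline{\partial}(\overline{y}_{i}) = \overline{z}_{i}$, so each $\overline{z}_{i}$ is a nonzero cocycle. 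Choosing lifts $y_{i} \in P$ of the $\overline{y}_{i}$ and setting $z_{i} := \partial_{P}(y_{i})$, the canonical chain map $P \to \overline{P}$ sends $z_{i}$ to $\overline{z}_{i}$, so in particular $z_{i} \neq 0$; and by a graded version of Nakayama's lemma, applied to the bounded-below free $\mathscr{A}^{\#}$-module $P^{\#}$, the homogeneous set $\{y_{i}\}_{i \in I} \sqcup \{z_{i}\}_{i \in I}$, which maps to a $\k$-basis of $\overline{P}$, is a free $\mathscr{A}^{\#}$-basis of $P^{\#}$.

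It follows that $P = \bigoplus_{i \in I} \bigl( \mathscr{A} y_{i} \oplus \mathscr{A} z_{i} \bigr)$ as graded $\mathscr{A}^{\#}$-modules, and since $\partial_{P}(y_{i}) = z_{i}$ and $\partial_{P}(z_{i}) = 0$, each summand $\mathscr{A} y_{i} \oplus \mathscr{A} z_{i} = \mathscr{A} y_{i} \oplus \mathscr{A} \partial_{P}(y_{i})$ is a DG $\mathscr{A}$-submodule, so this is a direct sum decomposition in $\mathscr{C}(\mathscr{A})$. Finally, $Y := \{y_{i}\}_{i \in I}$ is a categorically free subset of $P$: for any DG $\mathscr{A}$-module $N$ and any homogeneous degree-$0$ map $g \colon Y \to N$, the assignment $y_{i} \mapsto g(y_{i})$ and $\partial_{P}(y_{i}) \mapsto \partial_{N}(g(y_{i}))$, extended $\mathscr{A}$-linearly, is the unique DG morphism $P \to N$ restricting to $g$ (one checks directly that it commutes with the differentials). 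Hence $P$ is categorically free, as claimed. I expect the crux to be the second paragraph: promoting the lifted pairs $\{y_{i}, z_{i}\}$ to a genuine free $\mathscr{A}^{\#}$-basis that realizes a disk-type decomposition of $P$ in $\mathscr{C}(\mathscr{A})$ — this is where the graded Nakayama lemma and the finite-dimensionality and acyclicity of $\overline{P}$ (the latter encoding categorical projectivity, equivalently quasi-triviality of $P$) are used.
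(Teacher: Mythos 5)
Your proof is correct, but it takes a genuinely different route from the paper's. The paper works inside the ambient categorically free module $F=P\oplus Q$: it transports the two-step filtration $0\subset F(0)\subset F(1)=F$ to a filtration $0\subset P(0)\subset P(1)=P$ using Lemma \ref{useful}, realizes $P$ as $\mathrm{cone}(\beta)$ for $\beta\colon \Sigma^{-1}[P(1)/P(0)]\to P(0)$ via Lemma \ref{cone}, and then proves $\beta$ is an isomorphism by combining the long exact cohomology sequence (using $H(P)=0$) with Lemmas \ref{heq} and \ref{toiso}. You instead extract only two facts from categorical projectivity --- $P^{\#}$ is finitely generated free and bounded below, and $P$ is contractible --- and then argue intrinsically in $P$ by reducing modulo $\frak{m}$, splitting the finite-dimensional acyclic complex $P/\frak{m}P$ into disks, and lifting the disk basis to a free $\mathscr{A}^{\#}$-basis of $P^{\#}$ by graded Nakayama. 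Your route avoids Lemmas \ref{useful}, \ref{cone} and \ref{toiso} entirely and replaces the cone/long-exact-sequence analysis with linear algebra; it also exhibits the categorically free basis explicitly. Two small economies/justifications you should add: (i) you do not actually need Theorem \ref{semiqst} --- freeness of $P^{\#}$ already follows from Lemma \ref{basic} applied to the direct summand $P^{\#}$ of $F^{\#}$, and contractibility of $P$ follows from its being a direct summand of the contractible module $F$ (which also justifies $H(P)=0$, a fact the paper likewise uses without comment); (ii) the Nakayama step deserves one more sentence: surjectivity of $\bigoplus_i(\mathscr{A}e_{y_i}\oplus\mathscr{A}e_{z_i})\to P$ follows from graded Nakayama since $P^{\#}$ is bounded below, and injectivity follows because the kernel is a finitely generated projective direct summand whose reduction modulo $\frak{m}$ vanishes by the dimension count, hence is zero by Nakayama again. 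With those remarks supplied, your argument is complete.
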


\begin{proof}
Since $P$ is categorically projective, it is a direct summand of a categorically free DG $\mathscr{A}$-module $F$. There exists a DG $\mathscr{A}$-module $Q$ such that $F=P\oplus Q$. Let $Y$ be a categorically free subset of $F$. Then $$F=\bigoplus_{y\in Y}[\mathscr{A}y\oplus \mathscr{A}\partial_F(y)].$$  Let $F(0)=\bigoplus\limits_{y\in Y}\mathscr{A}\partial_F(y)$ and $F(1)=F$. Then $0\subset F(0)\subset F(1)=F$ is a strictly increasing semi-free filtration of $F$.  Let $\partial_F(y)=p_{0_y}+q_{0_y}$ and $y=p_{1_y}+q_{1_y}, y\in Y$, where $p_{0_y},p_{1_y}\in P$ and $q_{0_y},q_{1_y}\in Q$.
Then $$F(0)=\bigoplus_{y\in Y}\mathscr{A}\partial_F(y)=\sum\limits_{y\in Y}\mathscr{A}p_{0_y}\oplus\sum\limits_{y\in Y}\mathscr{A}q_{0_y}$$
and $$F(1)/F(0)=\bigoplus_{y\in Y}\mathscr{A}y=\sum\limits_{y\in Y}\mathscr{A}p_{1_y}\oplus\sum\limits_{y\in Y}\mathscr{A}q_{1_y}.$$
By Lemma \ref{useful}, $\sum\limits_{y\in Y}\mathscr{A}p_{0_y}$ and $\sum\limits_{y\in Y}\mathscr{A}p_{1_y}$ are  DG free $\mathscr{A}$-modules.
Let $\{e_{\lambda}|\lambda\in \Lambda\}$ and $\{\varepsilon_{\omega}|\omega\in \Omega\}$ be their DG free basis, respectively. Then $$\sum\limits_{y\in Y}\mathscr{A}p_{0_y}=\bigoplus_{\lambda\in \Lambda}\mathscr{A}e_{\lambda}\quad \text{and}\quad \sum\limits_{y\in Y}\mathscr{A}p_{1_y}=\bigoplus_{\omega\in \Omega}\mathscr{A}\varepsilon_{\omega}.$$
Let $P(0)=\sum\limits_{y\in Y}\mathscr{A}p_{0_y}$ and $P(1)=P$. Then
$$\frac{P(1)}{P(0)}=\frac{F(1)}{P(0)\oplus Q}=\frac{F(1)/F(0)}{\sum\limits_{y\in Y}\mathscr{A}q_{1_y}}=\sum\limits_{y\in Y}\mathscr{A}p_{1_y}=\bigoplus_{\omega\in \Omega}\mathscr{A}\varepsilon_{\omega}.$$
So $0=P(-1)\subset P(0)\subset P(1)=P$ is a semi-free filtration of $P$. Define a DG morphism $$\beta: \Sigma^{-1}[P(1)/P(0)]=\bigoplus_{\omega\in \Omega}\mathscr{A}\Sigma^{-1}\varepsilon_{\omega}\to P(0)$$ such that $\beta(\Sigma^{-1}\varepsilon_{\omega})=\partial_P(\varepsilon_{\omega})$.
By the proof of Lemma \ref{cone}, we have $P(1)=\mathrm{cone}(\beta)$.
On the other hand, the short exact sequence of DG $\mathscr{A}$-modules:
$$0\to P(0)\stackrel{\iota}{\to} P(1)\stackrel{\pi}{\to} P(1)/P(0)\to 0$$
induces a long exact sequence of cohomologies
$$\cdots \stackrel{\delta^{i-1}}{\to}H^i(P(0))\stackrel{H^i(\iota)}{\to} H^i[P(1)]\stackrel{H^i(\pi)}{\to} H^i[P(1)/P(0)]\stackrel{\delta^i}{\to}H^{i+1}(P(0))\stackrel{H^{i+1}(\iota)}{\to}\cdots.$$
Since $H(P)=0$, we have $H^i(\iota)=0, \forall i\in \mathbb{Z}$. The long exact sequence above implies that each connecting homomorphism $\delta^i$ is an isomorphism. Note that $$H[P(1)/P(0)]=\bigoplus\limits_{\omega\in \Omega}H(\mathscr{A})\lceil \varepsilon_{\omega}\rceil.$$ By the definition of connecting homomorphism, we have $\delta^{|\varepsilon_{\omega}|}(\lceil \varepsilon_{\omega}\rceil)=\lceil\partial_P(\varepsilon_{\omega})\rceil$.
Since $H(\beta)(\lceil \Sigma^{-1}\varepsilon_{\omega}\rceil)=\lceil \partial_P(\varepsilon_{\omega})\rceil$, we conclude that $\beta$ is a quasi-isomorphism.
By Lemma \ref{heq}, $\beta$ is homotopic equivalence. So there exists a DG morphism $$\alpha: P(0)\to \Sigma^{-1}[P(1)/P(0)]$$ such that
$\alpha\circ \beta \simeq \mathrm{id}_{\Sigma^{-1}[P(1)/P(0)]}$ and $\beta\circ \alpha\simeq \mathrm{id}_{P(0)}$.
Applying Lemma \ref{toiso}, we get $\alpha\circ \beta$ and $\beta\circ \alpha$ are both isomorphism. Thus $\beta$ is bijective.
Since $\beta$ is surjective, we have $P(0)=\sum\limits_{\omega\in \Omega}\mathscr{A}\partial_P(\varepsilon_{\omega})$. We claim that $\partial_{P}(\varepsilon_{\omega}), \omega\in \Omega$ are $\mathscr{A}$-linearly independent.
Indeed, if the $\mathscr{A}$-linear sum $\sum\limits_{\omega\in \Omega}a_{\omega}\partial_P(\varepsilon_{\omega})=0$ in $P(0)$, then $$0=\beta^{-1}(0)=\beta^{-1}[\sum\limits_{\omega\in \Omega}a_{\omega}\partial_P(\varepsilon_{\omega})]=\sum\limits_{\omega\in \Omega}a_{\omega}\Sigma^{-1}\varepsilon_{\omega}.$$
Since $\Sigma^{-1}\varepsilon_{\omega},\omega\in \Omega$ is a DG free basis of $\Sigma^{-1}P(1)/P(0)$, we get $a_{\omega}=0, \omega\in \Omega$. Then $P(0)=\bigoplus\limits_{\omega\in \Omega}\mathscr{A}\partial_{P}(\varepsilon_{\omega})$ and $P(1)=\mathrm{cone}(\beta)=\bigoplus\limits_{\omega\in \Omega}[\mathscr{A}\varepsilon_{\omega}\oplus \mathscr{A}\partial_P(\varepsilon_{\omega})]$. Therefore, $P=P(1)$ is a categorically free DG $\mathscr{A}$-module.

\end{proof}
\begin{cor}
Let $\mathscr{A}$ be a connected cochain DG algebra and $P$ a DG $\mathscr{A}^e$-module such that its underlying graded module $P^{\#}$ is a finitely generated $\mathscr{A}^{e\#}$-module. Then $P$ is a categorically free DG $\mathscr{A}^e$-module if it is categorically projective.

\end{cor}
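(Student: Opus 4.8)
The plan is to deduce this corollary from the preceding theorem by applying that theorem to the enveloping DG algebra in place of $\mathscr{A}$ itself. The essential point, already recorded in Section~1, is that $\mathscr{A}^e=\mathscr{A}\otimes\mathscr{A}\!^{op}$ is again a connected cochain DG algebra: its underlying graded algebra $\mathscr{A}^{e\#}=\mathscr{A}^{\#}\otimes(\mathscr{A}^{\#})^{op}$ is connected graded, its differential is $\partial_{\mathscr{A}}\otimes\mathscr{A}^{op}+\mathscr{A}\otimes\partial_{\mathscr{A}^{op}}$, its maximal DG ideal is $\frak{m}_{\mathscr{A}}\otimes\mathscr{A}^{op}+\mathscr{A}\otimes\frak{m}_{\mathscr{A}^{op}}$, and $H(\mathscr{A}^e)\cong H(\mathscr{A})^e$ is connected graded. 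Hence $\mathscr{A}^e$ satisfies all the standing hypotheses imposed on ``$\mathscr{A}$'' in the preceding theorem.

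First I would note that a DG $\mathscr{A}^e$-module $P$ whose underlying graded module $P^{\#}$ is finitely generated over $\mathscr{A}^{e\#}$ is precisely an instance of the data treated in the preceding theorem, with the base connected cochain DG algebra taken to be $\mathscr{A}^e$. Next I would observe that the notions of being categorically projective and of being categorically free are formulated entirely inside the category $\mathscr{C}(\mathscr{A}^e)$ of DG modules over the DG algebra $\mathscr{A}^e$: the first refers only to $\Hom_{\mathscr{A}^e}(P,-)$ carrying surjective morphisms to surjective quasi-isomorphisms, and the second only to the existence of a categorically free subset $Y\subset P$ with $P=\bigoplus_{y\in Y}[\mathscr{A}^e y\oplus\mathscr{A}^e\partial_P(y)]$. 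Consequently these properties of $P$ as a DG $\mathscr{A}^e$-module are exactly the corresponding properties of $P$ regarded as a DG module over the connected cochain DG algebra $\mathscr{A}^e$, and the preceding theorem applies verbatim with $\mathscr{A}^e$ in place of $\mathscr{A}$, giving that a categorically projective such $P$ is categorically free.

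There is essentially no real obstacle in this argument; the only thing that requires verification is that $\mathscr{A}^e$ genuinely meets the hypotheses of the preceding theorem, namely that it is a connected cochain DG algebra, and this is exactly the content of the remarks on the enveloping DG algebra in Section~1 (connectedness of $\mathscr{A}^{e\#}$, the Leibniz rule for $\partial_{\mathscr{A}^e}$, and the description of $H(\mathscr{A}^e)$). Once this has been noted, the corollary is immediate, in precisely the same manner in which the earlier corollary on semi-freeness of DG $\mathscr{A}^e$-modules is obtained from Theorem~\ref{semiqst}.
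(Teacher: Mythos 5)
Your proposal is correct and matches the paper's intent exactly: the corollary is stated without proof precisely because it is the preceding theorem applied verbatim to the connected cochain DG algebra $\mathscr{A}^e$, whose connectedness and DG structure are recorded in Section~1. Nothing further is needed.
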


In the rest of this section, we want to illustrate the relations between the classical Quillen-Suslin Theorem and Theorem \ref{semiqst}. In \cite{MGYC}, the notion of DG polynomial algebra was introduced and systematically studied. Recall that a connected cochain DG algebra $\mathscr{A}$ 
is called a DG polynomial algebra if $\mathscr{A}^{\#}=\k[x_1,x_2,\cdots,x_n]$ is a polynomial graded algebra with each $|x_i|=1$. By \cite[Theorem 3.1]{MGYC}, there exist some $t_1,t_2,\cdots,t_n\in \k$ such that $\partial_{\mathscr{A}}$ is defined  by
$$\partial_{\mathscr{A}}(x_i)=\sum\limits_{j=1}^nt_jx_ix_j=\sum\limits_{j=1}^{i-1}t_jx_jx_i+t_ix_i^2+\sum\limits_{j=i+1}^nt_jx_ix_j, \text{for any} \,\, i\in \{1,2,\cdots,n\}.$$
We write $\mathscr{A}(t_1,t_2,\cdots, t_n)$ for the DG polynomial algebra mentioned above. 
The set of DG polynomial algebras in $n$ degree one variables is
$$\Omega(x_1,x_2,\cdots,x_n)=\{\mathscr{A}(t_1,t_2,\cdots,t_n)|t_i\in
\k, i=1,2,\cdots, n\}\cong \Bbb{A}_{\k}^n.$$
 By \cite[Corollary 4.2]{MGYC}, 
there are only two isomorphism
 classes $\mathscr{A}(0,0,\cdots,0)$ and $\mathscr{A}(1,0,\cdots, 0)$ in space $\Omega(x_1,x_2,\cdots, x_n)$. Since any DG polynomial algebra $\mathscr{A}(t_1,t_2,\cdots,t_n)$ is a connected cochain DG algebra, we have the following corollary. 

\begin{cor}\label{dgpoly}
Each finitely generated semi-projective (resp. categorically projective) DG module over DG polynomial algebra $\mathscr{A}(t_1,t_2,\cdots,t_n)$ 
is a semi-free (resp. categorically free) DG module. 
\end{cor}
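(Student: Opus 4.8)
The plan is to leverage the known fact that any semi-projective DG module is a direct summand of a semi-free one, and then push the semi-free filtration down to the summand using the hypothesis on the underlying graded module. First I would write $F = P \oplus Q$ with $F$ semi-free, so that $P^{\#}$ becomes a direct summand of the free graded $\mathscr{A}^{\#}$-module $F^{\#}$; since $P^{\#}$ is finitely generated it is bounded below, hence by Lemma \ref{basic} it is actually \emph{free} over $\mathscr{A}^{\#}$. This is the crucial structural input: it tells us $P^{\#}$ itself carries a graded basis, so the only remaining task is to arrange the differential into a semi-free (filtered) shape.

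Next I would fix a semi-free filtration $0 = F(-1) \subset F(0) \subset \cdots$ of $F$ with DG-free quotients $F_i = F(i)/F(i-1)$, handling the finite DG-free-class case and the infinite case in parallel. Writing each basis element $e_{i_j}$ of $F_i$ as $p_{i_j} + q_{i_j}$ with $p_{i_j} \in P$, $q_{i_j} \in Q$, I would set $P(r) = \sum_{i \le r}\sum_{j} \mathscr{A} p_{i_j}$. The key claim is that $F(r) = P(r) \oplus (\text{analogous }Q\text{-part})$ and hence each quotient $P(r)/P(r-1)$ is a direct summand of the DG-free module $F_r$; by Lemma \ref{useful} such a summand is again DG free on a cocycle basis $\{\omega_{r_\lambda}\}$. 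One then checks $\partial_F(\omega_{r_\lambda})$ lands in $P(r-1)$ (because $\overline{\partial_F(\omega_{r_\lambda})} = 0$ in $F_r$ forces it into $F(r-1)$, and it automatically lies in the $P$-summand). Deleting the indices where the quotient vanishes yields a strictly increasing filtration of $P$ with DG-free cocycle quotients, which is exactly a semi-free filtration of $P$.

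The one genuinely delicate point, and the step I expect to be the main obstacle, is verifying in the infinite-DGfree-class case that $\bigcup_r P(r) = P$ — i.e.\ that the filtration we built actually exhausts $P$ rather than a proper submodule. Here I would argue on underlying graded modules: $\bigcup_r P(r)^{\#} = \sum_{i,j} \mathscr{A}^{\#} p_{i_j}$, and modulo $Q^{\#}$ this equals $F^{\#}/Q^{\#} \cong P^{\#}$, so the inclusion $\bigcup_r P(r) \subseteq P$ is forced to be an equality. A secondary subtlety worth care is checking that $P(r) \oplus Q(r) = F(r)$ as \emph{internal} direct sums (so that the quotients split correctly and Lemma \ref{useful} applies) — this follows from the corresponding splitting $F^{\#} = P^{\#} \oplus Q^{\#}$ restricted to the graded submodule $F(r)^{\#}$, using that $F(r)^{\#}$ is spanned by the $e_{i_j}$ with $i \le r$ and hence respects the decomposition. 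The finite case is then just the bounded version of the same argument, where one additionally observes that cancelling zero quotients from a length-$t$ filtration still terminates at $P$.
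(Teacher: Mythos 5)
The statement you were asked to prove is Corollary \ref{dgpoly}, which the paper dispatches in one line: a DG polynomial algebra $\mathscr{A}(t_1,\dots,t_n)$ has underlying graded algebra $\k[x_1,\dots,x_n]$ with all generators in degree one, hence is a connected cochain DG algebra, and so Theorem \ref{semiqst} and its categorical companion apply verbatim. What you have written instead is a proof of Theorem \ref{semiqst} itself, and as such it reproduces the paper's own argument for that theorem essentially step for step: the splitting $F=P\oplus Q$, freeness of $P^{\#}$ via Lemma \ref{basic}, pushing the semi-free filtration of $F$ down to $P$ and invoking Lemma \ref{useful} on the quotients, and the graded-module computation showing $\bigcup_r P(r)=P$ in the infinite-class case. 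That part is sound.

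Viewed as a proof of the corollary, however, there is a real gap. First, you never record the (trivial but necessary) observation that $\mathscr{A}(t_1,\dots,t_n)$ is a connected cochain DG algebra, which is the only content the corollary adds to the theorems. Second, and more substantively, the corollary has a second half --- ``categorically projective implies categorically free'' --- which your argument does not touch and which does \emph{not} follow from Theorem \ref{semiqst}. It requires the paper's separate theorem on categorically projective modules, whose proof starts from a categorically free $F=\bigoplus_{y\in Y}[\mathscr{A}y\oplus\mathscr{A}\partial_F(y)]$, builds a two-step filtration $0\subset P(0)\subset P(1)=P$, and uses Lemmas \ref{cone}, \ref{heq} and \ref{toiso} to show that the morphism $\beta:\Sigma^{-1}[P(1)/P(0)]\to P(0)$ is an isomorphism, whence $P=\bigoplus_{\omega}[\mathscr{A}\varepsilon_{\omega}\oplus\mathscr{A}\partial_P(\varepsilon_{\omega})]$ is categorically free. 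Without some version of that argument, half of the corollary remains unproved.
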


\begin{rem} Let $P$ a finitely generated graded module over the graded polynomial algebra $\k[x_1,x_2,\cdots,x_n]$. We write $(P,\partial_P=0)$ for the DG module over the trivial DG polynomial algebra $\mathscr{A}(0,0,\cdots,0)$. It is easy to check that $P$ is projective (resp. free ) if and only if $(P, \partial_P=0)$ is a semi-projective (resp. semi-free) DG module over $\mathscr{A}(0,0,\cdots,0)$. In this sense, we can recover the original Quillen-Suslin Theorem by Corollary \ref{dgpoly}.

\end{rem}

\section{applications in invariants of dg modules}
In this section, we will give applications Quillen-Suslin Theorem in the studies of invariants for DG modules. We will reveal the relations between cone length, ghost length and level of DG modules over a connected cochain DG algebras.
\begin{defn}{\rm
Let $M$ be a non-quasi-trivial DG $\mathscr{A}$-module. The cone length of $M$ is defined to be
the number
$$\mathrm{cl}_{\mathscr{A}}M =
\inf\{\mathrm{\,DGfree\,\,class}_{\mathscr{A}}F\,|\,F \stackrel{\simeq}\to M
 \ \text{is a semi-free resolution of}\  M\}.$$ And we define $\mathrm{cl}_{\mathscr{A}} N=-1$ if $H(N)=0$. }
\end{defn}

The invariant cone length of a DG modules was introduced and studied in \cite{MW3}.
One sees that $\mathrm{cl}_{\mathscr{A}}M$ is just the invariant `$l(M)$' defined in \cite[Definition 9]{Car}
when $\mathscr{A}$ is a DG polynomial algebra with zero differential.
Note that $\mathrm{cl}_{\mathscr{A}}M$ may be $+\infty$. By the existence of
 Eilenberg-Moore resolution, we have
 $\mathrm{cl}_{\mathscr{A}}M \le \mathrm{pd}_{H(\mathscr{A})}H(M)$.

\begin{prop}\cite[Theorem 3.7]{Mao2}\label{clfree}
Let $M$ be an object in $\mathscr{D}^{+}(\mathscr{A})$ such that $\mathrm{cl}_{\mathscr{A}}M<\infty$, then there is a minimal semi-free resolution $G$ of $M$ such that $$\mathrm{DGfree.class}_{\mathscr{A}}G=\mathrm{cl}_{\mathscr{A}}M.$$
\end{prop}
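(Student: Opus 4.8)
The plan is to produce $G$ as the minimal semi-free resolution of $M$ and to pinch $\mathrm{DGfree.class}_{\mathscr{A}}G$ between $\mathrm{cl}_{\mathscr{A}}M$ from above and below. Write $c=\mathrm{cl}_{\mathscr{A}}M<\infty$. By the definition of cone length as an infimum, there is a semi-free resolution $\varepsilon\colon F\stackrel{\simeq}{\to}M$ with $\mathrm{DGfree.class}_{\mathscr{A}}F=c$; fix a strictly increasing semi-free filtration $0=F(-1)\subset F(0)\subset\cdots\subset F(c)=F$ whose quotients $F(i)/F(i-1)$ are DG free. Since $M\in\mathscr{D}^{+}(\mathscr{A})$ has bounded below cohomology, it admits a minimal semi-free resolution $\theta\colon G\stackrel{\simeq}{\to}M$ (so $\partial_{G}(G)\subseteq\frak{m}G$ and $G^{\#}$ is bounded below), and by the minimal model theory any semi-free resolution of $M$ splits as $F\cong G\oplus C$ with $C$ a contractible semi-free DG $\mathscr{A}$-module. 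In particular $G$ is a DG direct summand of $F$.

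The core step is to transfer the length-$c$ filtration of $F$ to one on $G$ of length at most $c$, running the component-splitting scheme from the proof of Theorem \ref{semiqst} almost verbatim. Using $F=G\oplus C$, I would decompose each DG free basis element of $F(i)/F(i-1)$ as $e_{i_j}=p_{i_j}+q_{i_j}$ with $p_{i_j}\in G$ and $q_{i_j}\in C$, and set $G(r)=\sum_{i=0}^{r}\sum_{j}\mathscr{A}p_{i_j}$. As in that proof, each subquotient $G(r)/G(r-1)$ is a DG direct summand of the DG free module $F(r)/F(r-1)$. Crucially, $G(r)/G(r-1)$ is bounded below because $G^{\#}$ is, so $H(G(r)/G(r-1))$ is bounded below and Lemma \ref{useful} forces $G(r)/G(r-1)$ to be either zero or DG free. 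Deleting the vanishing quotients yields a strictly increasing semi-free filtration of $G$ of length $\le c$, whence $\mathrm{DGfree.class}_{\mathscr{A}}G\le c$.

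For the reverse inequality, $\theta\colon G\stackrel{\simeq}{\to}M$ is itself a semi-free resolution of $M$, so the infimum defining cone length gives $\mathrm{DGfree.class}_{\mathscr{A}}G\ge\mathrm{cl}_{\mathscr{A}}M=c$. Combining the two bounds yields $\mathrm{DGfree.class}_{\mathscr{A}}G=c=\mathrm{cl}_{\mathscr{A}}M$, and $G$ is the required minimal semi-free resolution.

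The main obstacle is the transfer step: one must verify that the $G$-components $p_{i_j}$ generate genuine DG $\mathscr{A}$-submodules forming an honest filtration (not merely graded submodules) and that each subquotient is a DG direct summand of a DG free module with bounded below cohomology, so that Lemma \ref{useful} applies. Both rely on $M\in\mathscr{D}^{+}(\mathscr{A})$ together with the minimality of $G$, which guarantees $G^{\#}$ is bounded below even when the chosen $F$ is not. A secondary point needing care is the decomposition $F\cong G\oplus C$, where bounded-belowness of $M$ is what furnishes a minimal semi-free model and its splitting off an arbitrary semi-free resolution.
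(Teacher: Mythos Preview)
The paper does not supply its own proof of this proposition; it is imported verbatim as \cite[Theorem 3.7]{Mao2} and used as a black box. So there is no in-paper argument to compare against.

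Your strategy is sound and in fact recycles the filtration-transfer mechanism that the paper develops for Theorem~\ref{semiqst}. The two inequalities are handled correctly: $\mathrm{DGfree.class}_{\mathscr{A}}G\ge c$ is immediate from the definition of cone length, and $\mathrm{DGfree.class}_{\mathscr{A}}G\le c$ comes from pushing the length-$c$ filtration of $F$ down to $G$ via the decomposition $F\cong G\oplus C$ and Lemma~\ref{useful}. One point you flag but should make fully explicit: the semi-free resolution $F$ realizing $\mathrm{DGfree.class}_{\mathscr{A}}F=c$ need not itself be bounded below, so the splitting $F\cong G\oplus C$ is not quite ``minimal model theory'' off the shelf. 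The clean way to get it with the tools in this paper is: lift the two resolutions to a homotopy equivalence $\phi\colon F\to G$, $\psi\colon G\to F$ (Lemma~\ref{heq}); since $G$ is bounded below with $\partial_G(G)\subseteq\frak{m}G$ and $G^{\#}$ free, Lemma~\ref{toiso} forces $\phi\psi\sim\mathrm{id}_G$ to be an isomorphism, so $\psi$ is split injective and $F\cong G\oplus C$ with $C$ acyclic and semi-projective, hence contractible. After that, the subquotients $G(r)/G(r-1)$ are bounded below because $G$ is, so Lemma~\ref{useful} applies exactly as in Theorem~\ref{semiqst}, and your argument goes through.
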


\begin{prop}\label{conelen}
Let $M$ be a compact left DG module over a connected cochain DG algebra $\mathscr{A}$.  Then
$\mathrm{cl}_{\mathscr{A}}M=n$ if and only if $M\in \mathrm{add}(\mathscr{A})^{\star n+1}\setminus\mathrm{add}(\mathscr{A})^{\star n}$.
\end{prop}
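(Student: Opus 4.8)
The plan is to establish both implications through the correspondence between finite semi-free filtrations and the extension operation $\star$, using the fact that $\mathscr{A}$-free-on-cocycles modules live in $\mathrm{add}(\mathscr{A})$ and that compactness lets us restrict to finite semi-bases. First I would observe that by Proposition~\ref{clfree} (applied to the object $M\in\mathscr{D}^+(\mathscr{A})$, which a compact module certainly is) the condition $\mathrm{cl}_{\mathscr{A}}M=n$ gives a minimal semi-free resolution $G\stackrel{\simeq}{\to}M$ with $\mathrm{DGfree.class}_{\mathscr{A}}G=n$. Thus $G$ admits a strictly increasing semi-free filtration $0=G(-1)\subset G(0)\subset\cdots\subset G(n)=G$ with each $G(r)/G(r-1)$ DG free on a cocycle basis. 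By Lemma~\ref{cone}, each inclusion $G(r-1)\hookrightarrow G(r)$ sits in a mapping cone triangle, equivalently an exact triangle $G(r-1)\to G(r)\to \mathscr{A}\otimes V_r\to \Sigma G(r-1)$ in $\mathscr{D}(\mathscr{A})$, where $\mathscr{A}\otimes V_r\in\mathrm{add}(\mathscr{A})$ (here finiteness of the semi-basis, from compactness via \cite[Proposition 3.3]{MW2}, ensures each $V_r$ is finite-dimensional, so $\mathscr{A}\otimes V_r$ is genuinely a \emph{finite} direct sum of suspensions of ${}_{\mathscr{A}}\mathscr{A}$). Since $G(0)\in\mathrm{add}(\mathscr{A})$, an induction on $r$ using these triangles shows $G(r)\in\mathrm{add}(\mathscr{A})^{\star (r+1)}$, hence $M\cong G\in\mathrm{add}(\mathscr{A})^{\star(n+1)}$.

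For the converse direction of the "only if", and for the reverse implication, I would argue that $M\notin\mathrm{add}(\mathscr{A})^{\star n}$: if $M$ were an $(n-1)$-fold extension of objects of $\mathrm{add}(\mathscr{A})$, one could splice the corresponding triangles to build a semi-free resolution of $M$ of DG free class at most $n-1$ (each $\mathrm{add}(\mathscr{A})$ object is quasi-isomorphic to a DG free module on cocycles, and the cones of the connecting maps assemble into a semi-free filtration of length $n-1$ by the discussion preceding the definition of semi-free filtration), forcing $\mathrm{cl}_{\mathscr{A}}M\le n-1$, a contradiction. Conversely, if $M\in\mathrm{add}(\mathscr{A})^{\star(n+1)}\setminus\mathrm{add}(\mathscr{A})^{\star n}$, the membership in $\mathrm{add}(\mathscr{A})^{\star(n+1)}$ gives, by the same splicing, a semi-free resolution of DG free class at most $n$, so $\mathrm{cl}_{\mathscr{A}}M\le n$; and $\mathrm{cl}_{\mathscr{A}}M\le n-1$ would, by the first implication already proved, put $M$ in $\mathrm{add}(\mathscr{A})^{\star n}$, contradicting the hypothesis. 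Hence $\mathrm{cl}_{\mathscr{A}}M=n$.

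The main obstacle I anticipate is the passage "from an abstract $\star$-decomposition in $\mathscr{D}(\mathscr{A})$ back to an honest semi-free filtration of a concrete resolution." An object of $\mathrm{add}(\mathscr{A})$ is only \emph{isomorphic in $\mathscr{D}(\mathscr{A})$} to a DG free module on cocycles, and the triangles defining membership in $\mathrm{add}(\mathscr{A})^{\star(n+1)}$ are triangles in the derived category, not literal short exact sequences; one must replace each such triangle by a mapping-cone sequence of honest DG morphisms between semi-free modules (using that every DG module has a semi-free resolution and that quasi-isomorphisms between semi-free modules are homotopy equivalences, Lemma~\ref{heq}), then check that the resulting tower of cones is genuinely a semi-free filtration with strictly increasing terms of the right length. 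This is where compactness is essential: it keeps all the semi-bases finite so that "DG free on a finite cocycle basis" is preserved at each stage and the bookkeeping of the filtration length is exactly $n$. Once this translation is set up carefully, both implications follow from Proposition~\ref{clfree}, Lemma~\ref{cone}, and the elementary behaviour of $\mathrm{cl}_{\mathscr{A}}$ under the filtration length, as indicated above.
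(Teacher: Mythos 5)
Your proposal is correct and follows essentially the same route as the paper: the forward implication via Proposition~\ref{clfree} and the short exact sequences of a strictly increasing semi-free filtration, and the converse by realizing each derived-category triangle as the mapping cone of an honest DG morphism into a minimal semi-free resolution and extending its filtration by one step. The ``splicing'' difficulty you flag is handled in the paper exactly as you anticipate, by an induction on $n$ that peels off one $\mathrm{add}(\mathscr{A})$ layer at a time and uses that a morphism out of an object of $\mathrm{add}(\mathscr{A})$ lifts to a DG morphism of semi-free modules.
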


\begin{proof}
We only need to prove  $M\in \mathrm{add}(\mathscr{A})^{\,\star n+1}$ if and only if
$\mathrm{cl}_{\mathscr{A}}M \le n$.

If $\mathrm{cl}_{\mathscr{A}}M =l \le n$, then  $M$ admits a minimal semi-free resolution
$F$ with $\mathrm{DGfree.class}_{\mathscr{A}}F=l$ by Proposition \ref{clfree}. Since $M\in \mathscr{D}^c(\mathscr{A})$, $F$ has a finite semi-basis.
The semi-free DG module $F$ has a strictly increasing semi-free filtration
$$0=F(-1)\subset F(0)\subset\cdots\subset F(i)\subset
F(i+1)\subset \cdots \subset F(l) = F$$ of length $l$. This yields a
sequence of short exact sequences
$$0\to F(i-1)\to F(i) \to F(i)/F(i-1)\to 0,\,  1\le i\le l.$$
Since $F(0)$ and $F(i)/F(i-1), \,i\ge 1$ are in
$\mathrm{add}(\mathscr{A})$, induction shows that $F$ is in
$\mathrm{add}(\mathscr{A})^{\,\star l+1}$. Hence $M$ is an object
in $\mathrm{add}(\mathscr{A})^{\,\star n+1}$.

Conversely,  let $M$ be an object in
$\mathrm{add}(\mathscr{A})^{\,\star n+1}$ for some $n\ge 0$. We
will prove $\mathrm{cl}_{\mathscr{A}}M \le n$ by induction on $n$. For $n=0$,
the assertion is evident. For $n\ge 1$, there exists an exact
triangle
$$ L\stackrel{\varepsilon}{\to} N\to M \to \Sigma L,$$
where $L$ and $N$ are in $\mathrm{add}(\mathscr{A})$ and
$\mathrm{add}(\mathscr{A})^{\,\star n}$ respectively. Since $L$ is
in $\mathrm{add}(\mathscr{A})$, the DG morphism $\varepsilon$ can
be represented by a DG morphism $\varepsilon': F_L \to F_N$, where
$F_L$ is a finite direct sum of shifted copies of $\mathscr{A}$ and $F_N$ is a minimal
semi-free resolution of $N$ with $\mathrm{DGfree.class}_{\mathscr{A}}F_N\le n-1$ by inductive assumption and Proposition \ref{clfree}.
Let $t=\mathrm{DGfree.class}_{\mathscr{A}}F_N$. Then $F_N$ admits
a strictly increasing semi-free
filtration
$$0=F_N(-1)\subset F_N(0)\subset \cdots\subset
F_N(i)\subset \cdots \subset F_N(t)=F_N$$ of length $t$. Clearly,
$M\cong \mathrm{Cone}(\varepsilon')$ in $\mathscr{D}(\mathscr{A})$. We have the
following cone exact sequence
$$ 0\to F_N\stackrel{\phi}{\to} \mathrm{Cone}(\varepsilon')
\to \Sigma F_L\to 0.$$ Since $\phi$ is an injective DG morphism,
$$0\subset \phi(F_N(0)) \subset \cdots \subset \phi(F_N(i))\subset
\cdots \subset \phi(F_N(t))\subset \mathrm{Cone}(\varepsilon')$$ is
a strictly semi-free filtration of $\mathrm{Cone}(\varepsilon')$ of
length $t+1\le n$. Since $\mathrm{Cone}(\varepsilon') \cong M$ in
$\mathscr{D}(\mathscr{A})$, we have $\mathrm{cl}_{\mathscr{A}}M \le n$.

\end{proof}

The concept of `ghost length' was first introduced by Christensen in \cite{Chr}.
Later, Hovey-Lockridge \cite{HL1} and  Kuribayashi
\cite{Kur1}
 applied this invariant to complexes and DG modules, respectively.
 \begin{defn}{\rm
A DG $\mathscr{A}$-module $M$ is said to have ghost
length $n$, written by $\mathrm{gh.len}_{\mathscr{A}} M = n$, if every composite
$$M\stackrel{f_1}{\to} I_1\stackrel{f_2}{\to} \cdots
\stackrel{f_{n+1}}{\to} I_{n+1}$$ of $n+1$ ghosts is $0$ in
$\mathscr{D}(\mathscr{A})$, and there is a composite of $n$ ghosts from $M$
that is not $0$ in $\mathscr{D}(\mathscr{A})$.
We set $\mathrm{gh.len}_{\mathscr{A}}M=-1$
if $M$ is zero object in $\mathscr{D}(\mathscr{A})$. }
 \end{defn}

\begin{lem}\label{homologysurj}
For any DG $\mathscr{A}$-module $M$, there is a DG free $\mathscr{A}$-module $F$ and a
morphism of DG $\mathscr{A}$-module $f: F\to M$, such that $H(f)$ is
surjective. Furthermore, if $M\in \mathscr{D}_{fg}(\mathscr{A})$, then $F$ has a finite DG free basis.
\end{lem}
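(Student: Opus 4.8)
For any DG $\mathscr{A}$-module $M$, there is a DG free $\mathscr{A}$-module $F$ and a morphism $f: F\to M$ with $H(f)$ surjective; moreover if $M\in \mathscr{D}_{fg}(\mathscr{A})$ then $F$ can be chosen with a finite DG free basis.

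Here is how I would prove it.

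The plan is to lift a homogeneous $\k$-linear set of cocycle representatives for generators of $H(M)$ to a DG morphism out of a DG free module. First I would choose, for each homogeneous generator of the graded $H(\mathscr{A})$-module $H(M)$, a cocycle $z_\lambda\in M$ with $\lceil z_\lambda\rceil$ equal to that generator; index these by a graded set $Y=\{\lambda\}$ with $|e_\lambda|=|z_\lambda|$. Form the DG free module $F=\mathscr{A}^{(Y)}=\bigoplus_{\lambda\in Y}\mathscr{A}e_\lambda$ with $\partial(e_\lambda)=0$, as in the notation set up in Section~1. Define $f:F\to M$ on the basis by $f(e_\lambda)=z_\lambda$ and extend $\mathscr{A}$-linearly; since each $e_\lambda$ and each $z_\lambda$ is a cocycle, $f$ commutes with the differentials, so $f$ is a genuine DG morphism. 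On cohomology, $H(f)$ sends $\lceil e_\lambda\rceil$ to $\lceil z_\lambda\rceil$, and since $\{\lceil e_\lambda\rceil\}$ generates $H(F)=\bigoplus_\lambda H(\mathscr{A})\lceil e_\lambda\rceil$ as an $H(\mathscr{A})$-module and $H(f)$ is $H(\mathscr{A})$-linear, its image contains all the $\lceil z_\lambda\rceil$, hence all of $H(M)$. Thus $H(f)$ is surjective.

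For the finiteness clause, suppose $M\in \mathscr{D}_{fg}(\mathscr{A})$, so $H(M)$ is a finitely generated graded $H(\mathscr{A})$-module. Then one can take $Y$ finite — just a finite generating set of $H(M)$ — and the construction above produces an $F$ with a finite DG free basis $\{e_\lambda\}$. The same argument as before shows $H(f)$ is surjective.

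I do not expect a serious obstacle here: the only points needing care are the bookkeeping that the $z_\lambda$ are honest cocycles (so that $f$ is a chain map, not merely a graded map) and that $H(\mathscr{A})$-linearity of $H(f)$ is what upgrades ``hits the generators'' to ``hits everything''. Both are routine. If one wanted to avoid even invoking finite generation of $H(M)$ directly, one could instead cite the existence of a minimal semi-free resolution with finite semi-basis for objects of $\mathscr{D}^c(\mathscr{A})$ (from \cite[Proposition 3.3]{MW2}) and take $F=F(0)$, the bottom DG free piece of that resolution; but the direct argument above is cleaner and works verbatim in the general (non-finite) case as well.
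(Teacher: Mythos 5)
Your proof is correct and follows essentially the same route as the paper: pick cocycle representatives of a generating set of the graded $H(\mathscr{A})$-module $H(M)$, map a DG free module with trivial differential onto them, and use $H(\mathscr{A})$-linearity of $H(f)$ to get surjectivity, taking the generating set finite when $H(M)$ is finitely generated. No substantive difference.
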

\begin{proof}
Choose a set of cycles $Z$ in $M$ such that $\{[z]\,|\,z\in Z\}$
generate the graded $H(\mathscr{A})$-module $H(M)$. Let $E=\{e_z\,|\,z\in Z\}$
be a linearly independent set over $\mathscr{A}$. Define a DG free $\mathscr{A}$-module
$$F=\bigoplus\limits_{e_z\in E} \mathscr{A}e_z $$ by $\partial_F(e_z)=0$, for
any $e_z\in E$. Let $f: F\to M$ be the DG morphism defined by
$f(e_z) = z$, for any $z\in Z$. It is easy to check that $H(f)$ is
surjective.
\end{proof}

\begin{lem}\label{dgfree}
Assume that $f: F\to M$ is a ghost morphism of DG $\mathscr{A}$-modules
and $F$ is a DG free $\mathscr{A}$-module. Then $f$ is homotopic to zero.
\end{lem}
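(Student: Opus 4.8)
The plan is to reduce the statement to a statement about the behaviour of a morphism out of a DG free module on its free basis, and then use that a ghost morphism kills the cohomology classes carried by the basis elements. First I would write $F = \bigoplus_{y\in Y}\mathscr{A}e_y$ with $\partial_F(e_y)=0$, so that each $e_y$ is a cocycle and $\{[e_y]\}$ generates $H(F)$ as an $H(\mathscr{A})$-module. Since $f$ is a ghost morphism, $H(f)=0$, hence $[f(e_y)] = H(f)([e_y]) = 0$ in $H(M)$ for every $y\in Y$; that is, each $f(e_y)$ is a coboundary, say $f(e_y) = \partial_M(m_y)$ for some $m_y\in M$ with $|m_y| = |e_y|-1$.

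Next I would build the candidate homotopy $h\colon F\to M$ of degree $-1$ directly on the free basis. Because $F$ is DG free, an $\mathscr{A}$-homomorphism of degree $-1$ out of $F$ is determined freely by its values on the $e_y$, so I set $h(e_y) = m_y$ and extend by $h(ae_y) = (-1)^{|a|}a\,m_y$ to make $h$ an $\mathscr{A}$-homomorphism of degree $-1$ (the sign convention is the one in the paper's definition of $\mathscr{A}$-homomorphism of nonzero degree). Then I would verify $f = \partial_M\circ h + h\circ\partial_F$: on a basis element, $\partial_F(e_y)=0$ kills the second term, while $\partial_M(h(e_y)) = \partial_M(m_y) = f(e_y)$, so the identity holds on the $e_y$; extending $\mathscr{A}$-linearly and using the Leibniz rule together with $f$, $h$, $\partial_M$, $\partial_F$ being $\mathscr{A}$-homomorphisms, the identity propagates to all of $F$. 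Hence $f$ is homotopic to zero.

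The only point requiring care — and the main potential obstacle — is checking that the $\mathscr{A}$-linear extension $h$ is compatible with the differentials in the sense that $f - \partial_M h - h\partial_F$ really vanishes as a map of complexes, not just on the generators: one must confirm that $\partial_M h + h\partial_F$ is itself a DG morphism (degree $0$, commuting with differentials) so that agreeing with the DG morphism $f$ on a generating set of the $\mathscr{A}^{\#}$-module $F^{\#}$ forces equality everywhere. This is a routine sign-bookkeeping verification using $\partial_M^2=0$, $\partial_F^2=0$, and the Leibniz rule, and presents no real difficulty; I would state it and leave the signs to the reader.
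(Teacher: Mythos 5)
Your proposal is correct and follows essentially the same route as the paper: express $F=\bigoplus_{y\in Y}\mathscr{A}e_y$ on a basis of cocycles, use $H(f)=0$ to write each $f(e_y)$ as a coboundary $\partial_M(m_y)$, and extend $e_y\mapsto m_y$ $\mathscr{A}$-linearly to a degree $-1$ homotopy. The paper likewise leaves the sign-bookkeeping verification of $f=\partial_M\circ\sigma+\sigma\circ\partial_F$ to the reader, so your added remark about checking the identity beyond the generators is a welcome but inessential refinement.
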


\begin{proof}
By the assumption that $F$ is a DG free $\mathscr{A}$-module, we denote it by $$F=\bigoplus\limits_{\lambda\in \Lambda}
\mathscr{A}e^{\lambda},$$ where $\partial_F(e^{\lambda})=0$ for any
$\lambda\in \Lambda$. Since $f$ is a DG morphism, we have
$\partial_M\circ f = f\circ
\partial_F$. Hence $f(e^{\lambda})\in \mathrm{Ker}(\partial_M)$, for any $\lambda\in \Lambda$. By
the assumption $H(f)=0$, we have $[f(e^{\lambda})]$ is zero in
$H(M)$. There exists $m^{\lambda}\in M$ such that $\partial_M(m^{\lambda})=
f(e^{\lambda})$. We define an $\mathscr{A}$-linear map $\sigma: F\to M$ by
$\sigma(e^{\lambda})=m^{\lambda}$ for any $\lambda\in \Lambda$. It
is easy to check that $f = \partial_M\circ\sigma + \sigma\circ
\partial_F$. Hence $f$ is homotopic to zero.
\end{proof}

\begin{lem}\label{summand}
Assume that $g: G \to M$ is a ghost morphism of DG $\mathscr{A}$-modules
and $G$ is a direct summand of a DG free $\mathscr{A}$-module. Then $g$ is
homotopic to zero.
\end{lem}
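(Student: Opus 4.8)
The plan is to reduce Lemma \ref{summand} to the already-established Lemma \ref{dgfree} by pushing the ghost morphism along the inclusion and projection of $G$ into a DG free module. Write $G$ as a direct summand of a DG free $\mathscr{A}$-module $F$, so there are DG morphisms $\iota: G \to F$ and $\rho: F \to G$ with $\rho \circ \iota = \mathrm{id}_G$. Given the ghost morphism $g: G \to M$, consider the composite $g \circ \rho: F \to M$. Since $H(\rho)$ is a morphism of $H(\mathscr{A})$-modules and $H(g) = 0$, we get $H(g \circ \rho) = H(g) \circ H(\rho) = 0$, so $g \circ \rho$ is a ghost morphism out of the DG free module $F$. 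By Lemma \ref{dgfree}, $g \circ \rho$ is homotopic to zero.

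Next I would precompose the null-homotopy with $\iota$ to obtain the desired null-homotopy of $g$. Concretely, if $h: F \to M$ is an $\mathscr{A}$-homomorphism of degree $-1$ with $g \circ \rho = \partial_M \circ h + h \circ \partial_F$, then set $h' = h \circ \iota : G \to M$, an $\mathscr{A}$-homomorphism of degree $-1$. Using that $\iota$ is a DG morphism (so $\partial_F \circ \iota = \iota \circ \partial_G$) and that $\rho \circ \iota = \mathrm{id}_G$, a direct computation gives
\begin{align*}
\partial_M \circ h' + h' \circ \partial_G &= \partial_M \circ h \circ \iota + h \circ \iota \circ \partial_G \\
&= \partial_M \circ h \circ \iota + h \circ \partial_F \circ \iota \\
&= (\partial_M \circ h + h \circ \partial_F)\circ \iota \\
&= g \circ \rho \circ \iota = g.
\end{align*}
Hence $g$ is homotopic to zero.

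There is essentially no obstacle here: the argument is a formal diagram chase, and the only ingredients are the functoriality of $H(-)$, the characterization of DG free modules used in Lemma \ref{dgfree}, and the compatibility of $\iota$ with differentials. The one point requiring a line of care is checking that $h \circ \iota$ indeed has degree $-1$ and is $\mathscr{A}$-linear in the graded sense, which is immediate from the definitions since composition of $\mathscr{A}$-homomorphisms adds degrees and $\iota$ has degree $0$. I would therefore present the proof in the compact two-step form above.
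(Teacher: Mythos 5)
Your proof is correct and follows essentially the same route as the paper: the paper extends $g$ to a ghost morphism $f$ on the DG free module $G\oplus H$ by setting $f|_H=0$ (which is exactly your $g\circ\rho$), applies Lemma \ref{dgfree}, and restricts the resulting homotopy to $G$ (your $h\circ\iota$). The only difference is notational — splitting maps $\iota,\rho$ versus restriction to summands — so nothing further is needed.
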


\begin{proof}
Suppose that $G\oplus H$ is a DG free $\mathscr{A}$-module. Let $f: G\oplus
H\to M$ be a DG morphism defined by $f|_{G}=g$ and $f|_{H}=0$. We
have $H(f)=0$. By Lemma \ref{dgfree}, $f$ is homotopic to $0$. There
is an $\mathscr{A}$-linear map $\sigma: G\oplus H\to M$ of degree $+1$ such
that $f =
\partial_M\circ\sigma + \sigma\circ
\partial_F$. Hence $f|_{G} = \partial_M\circ\sigma|_{G} + \sigma|_{G}\circ
\partial_F|_{G}$. That is $g =\partial_M\circ\sigma|_{G} + \sigma|_{G}\circ
\partial_G$ and $g$ is homotopic to zero.
\end{proof}

\begin{lem}\label{ghostproj}
Let $\gamma : M\to N$ be a ghost morphism in $\mathscr{D}(\mathscr{A})$ such
that $M$ is ghost projective. Then $\gamma$ is zero in
$\mathscr{D}(\mathscr{A})$.
\end{lem}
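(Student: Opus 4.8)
The plan is to transport the situation along the isomorphism witnessing ghost projectivity and then to apply Lemma \ref{summand}. First, since $M$ is ghost projective, choose an isomorphism $\phi\colon M\to G$ in $\mathscr{D}(\mathscr{A})$, where $G$ is a direct summand of some DG free $\mathscr{A}$-module $F$. Replacing $\gamma$ by $\gamma\circ\phi^{-1}\colon G\to N$, which is again a ghost morphism because $H(\gamma\circ\phi^{-1})=H(\gamma)\circ H(\phi)^{-1}=0$, it suffices to show that every ghost morphism $G\to N$ in $\mathscr{D}(\mathscr{A})$ is zero.

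Next, I would observe that a DG free $\mathscr{A}$-module is semi-free: its DG free basis of cocycles is a semi-basis with $E_0=E$ and $E_u=\emptyset$ for $u>0$. Hence $G$, being a direct summand of $F$, is semi-projective, so the canonical map $\Hom_{\mathscr{K}(\mathscr{A})}(G,N)\to\Hom_{\mathscr{D}(\mathscr{A})}(G,N)$ is bijective. Therefore the morphism $\gamma\circ\phi^{-1}$ of $\mathscr{D}(\mathscr{A})$ is represented by an honest DG morphism $g\colon G\to N$, unique up to homotopy. Since homology is computed the same way in $\mathscr{C}(\mathscr{A})$ and in $\mathscr{D}(\mathscr{A})$, we get $H(g)=H(\gamma\circ\phi^{-1})=0$, that is, $g$ is a ghost morphism of DG $\mathscr{A}$-modules.

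Finally, Lemma \ref{summand} applies to $g\colon G\to N$ because $G$ is a direct summand of the DG free module $F$: it yields that $g$ is homotopic to zero, hence $g=0$ in $\mathscr{K}(\mathscr{A})$ and therefore $\gamma\circ\phi^{-1}=0$ in $\mathscr{D}(\mathscr{A})$. Composing with $\phi$ gives $\gamma=0$ in $\mathscr{D}(\mathscr{A})$, as required.

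I expect the only delicate point to be the identification $\Hom_{\mathscr{K}(\mathscr{A})}(G,N)\cong\Hom_{\mathscr{D}(\mathscr{A})}(G,N)$, i.e.\ the realization of the abstract derived-category morphism $\gamma\circ\phi^{-1}$ by a genuine DG morphism; this rests on $G$ being semi-projective and is precisely the place where the hypothesis that $M$ be ghost projective is used. Everything else --- stability of the ghost property under pre- and post-composition with isomorphisms, and the passage from ``homotopic to zero'' to ``zero in $\mathscr{D}(\mathscr{A})$'' --- is routine.
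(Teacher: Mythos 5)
Your proof is correct and follows essentially the same route as the paper: both realize the derived-category morphism as a genuine DG morphism out of a direct summand of a DG free module (you via the bijection $\Hom_{\mathscr{K}(\mathscr{A})}(G,N)\cong\Hom_{\mathscr{D}(\mathscr{A})}(G,N)$ for semi-projective $G$, the paper via an explicit roof and a lift through the quasi-isomorphism) and then conclude with Lemma \ref{summand}. The difference is only in packaging, since that $\Hom$-bijection is proved by exactly the lifting argument the paper carries out by hand.
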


\begin{proof}
We can write $\gamma =\Theta(g)\Theta(f)^{-1}$ for some morphisms of
DG $\mathscr{A}$-modules fitting into a diagram of the form
\begin{equation*}
\xymatrix{
  & W \ar[dl]_f \ar[dr]^g& \\
   M&    &N    }
 \end{equation*}
where $f$ is a quasi-isomorphism. Since $\Theta(f)$ is an
isomorphism in $\mathscr{D}(\mathscr{A})$ and $\gamma$ is a ghost, we can
conclude that $H(g)=0$. By the assumption that $M$ is ghost
projective, there is a semi-projective resolution $\varepsilon:
F_M\stackrel{\simeq}{\to} M$  such that $F_M$ is a direct summand of
a DG free $\mathscr{A}$-module. Since $f$ is a quasi-isomorphism and $F_M$ is
homotopically projective, there is a morphism $h: F_M\to W$ such that
$f\circ h \sim \varepsilon$. It is easy to check $h$ is also a
quasi-isomorphism. Therefore $H(g\circ h)=0$. By Lemma
\ref{summand}, $g\circ h$ is zero in $\mathrm{K}(\mathscr{A})$. Obviously,
$\Theta(g\circ h)= \Theta(g)\circ \Theta(h)=0$. Since $\Theta(h)$ is
an isomorphism in $\mathscr{D}(\mathscr{A})$, we may conclude that
$\Theta(g)=0$ in $\mathscr{D}(\mathscr{A})$. Therefore $\gamma=0$ in
$\mathscr{D}(\mathscr{A})$.
\end{proof}

By Lemma \ref{ghostproj}, each ghost projective DG $\mathscr{A}$-module has
ghost length $0$. Hence, one might think that the ghost length of a
DG $\mathscr{A}$-module in DG context is analogous to the projective dimension
of a module in homological ring theory.

\begin{prop}\label{ghlen}
Let $M$ be a compact left DG module over a connected cochain DG algebra $\mathscr{A}$.  Then
$\mathrm{gh.len}_{\mathscr{A}}M=n$ if and only if $M\in \langle \mathscr{A}\rangle_{n+1}\setminus \langle\mathscr{A}\rangle_{n}$.
\end{prop}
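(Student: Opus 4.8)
The plan is to establish the two inequalities encoded by the statement: namely that $\mathrm{gh.len}_{\mathscr{A}}M \le n$ forces $M\in\langle\mathscr{A}\rangle_{n+1}$, and conversely that $M\in\langle\mathscr{A}\rangle_{n+1}$ forces $\mathrm{gh.len}_{\mathscr{A}}M\le n$. Since $\langle\mathscr{A}\rangle_{n+1}=\langle\mathscr{A}\rangle_n\diamond\langle\mathscr{A}\rangle_1$ and $\langle\mathscr{A}\rangle_1=\mathrm{smd}(\mathrm{add}(\mathscr{A}))$, by the associativity formula $\mathcal{C}_1\diamond\cdots\diamond\mathcal{C}_n=\mathrm{smd}(\mathcal{C}_1\star\cdots\star\mathcal{C}_n)$ we may identify $\langle\mathscr{A}\rangle_{n+1}$ with $\mathrm{smd}(\mathrm{add}(\mathscr{A})^{\star(n+1)})$. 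Combined with Proposition \ref{conelen} and the Quillen--Suslin-type Theorem \ref{semiqst}, for a \emph{compact} $M$ the distinction between $\langle\mathscr{A}\rangle_{n+1}$ and $\mathrm{add}(\mathscr{A})^{\star(n+1)}$ should collapse: a compact direct summand of an object of $\mathrm{add}(\mathscr{A})^{\star(n+1)}$ is again in $\mathrm{add}(\mathscr{A})^{\star(n+1)}$ because $\mathscr{K}(\mathscr{SF}_{fg}(\mathscr{A}))$ is closed under summands. So I would first reduce the problem to showing $\mathrm{gh.len}_{\mathscr{A}}M=n$ iff $M\in\mathrm{add}(\mathscr{A})^{\star(n+1)}\setminus\mathrm{add}(\mathscr{A})^{\star n}$, and then invoke Proposition \ref{conelen} to translate this into $\mathrm{cl}_{\mathscr{A}}M=n$; hence the proposition becomes equivalent to the assertion $\mathrm{gh.len}_{\mathscr{A}}M=\mathrm{cl}_{\mathscr{A}}M$ for compact $M$, which is the promised coincidence advertised in the introduction.

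For the inequality $\mathrm{gh.len}_{\mathscr{A}}M\le \mathrm{cl}_{\mathscr{A}}M$: given a minimal semi-free resolution $F$ of $M$ with $\mathrm{DGfree.class}_{\mathscr{A}}F=n$ (using Proposition \ref{clfree} and compactness to get a finite semi-basis), take the semi-free filtration $0=F(-1)\subset F(0)\subset\cdots\subset F(n)=F$ with each $F(i)/F(i-1)$ DG free. Given any composite of $n+1$ ghosts starting at $M\simeq F$, I would peel off one stage of the filtration at a time: each $F(i)/F(i-1)$ is DG free, so by Lemma \ref{dgfree} any ghost out of it is nullhomotopic, and a standard octahedral/triangulated argument (the ghost lemma, as used by Kuribayashi) shows a composite of $n+1$ ghosts kills $M$. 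This is essentially the inequality $\mathrm{gh.len}_{\mathscr{A}}M+1\le\mathrm{level}^{\mathscr{A}}_{\mathscr{D}(\mathscr{A})}(M)\le n+1$ already recalled in the introduction, so little new work is needed here.

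For the reverse inequality $\mathrm{cl}_{\mathscr{A}}M\le\mathrm{gh.len}_{\mathscr{A}}M$, which I expect to be the main obstacle: suppose $\mathrm{gh.len}_{\mathscr{A}}M=n$. Build a tower by repeatedly applying Lemma \ref{homologysurj}: choose a DG free $F_0$ with finite basis and $f_0:F_0\to M$ inducing a surjection on cohomology; complete $f_0$ to an exact triangle $F_0\to M\to M_1\to\Sigma F_0$; then $M_1$ is compact and the natural map $M\to M_1$ is a ghost (since $H(f_0)$ is onto). Iterate to get ghosts $M\to M_1\to M_2\to\cdots$. Because every composite of $n+1$ ghosts from $M$ vanishes in $\mathscr{D}(\mathscr{A})$, after $n$ steps the composite $M\to M_n$ is already forced to factor trivially, and one deduces $M_n\in\mathrm{add}(\mathscr{A})$ (using that $M_n$ is compact with a minimal semi-free resolution whose differential lands in $\mathfrak{m}F$: if the cohomology-surjection trick can no longer be iterated nontrivially, the minimal model has DG free class zero). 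Unwinding the $n$ triangles then exhibits $M$ as an $n$-fold extension of objects of $\mathrm{add}(\mathscr{A})$, i.e. $M\in\mathrm{add}(\mathscr{A})^{\star(n+1)}$, whence $\mathrm{cl}_{\mathscr{A}}M\le n$ by Proposition \ref{conelen}. The delicate point — and where I expect to spend the most care — is proving that when $\mathrm{gh.len}_{\mathscr{A}}M_{n-1}$ has dropped to $0$ one genuinely has $M_n$ (or $M_{n-1}$) ghost projective \emph{and compact}, so that by Theorem \ref{semiqst} it is semi-free of DG free class $0$, hence in $\mathrm{add}(\mathscr{A})$; this is exactly where Quillen--Suslin is used, and one must check that minimality of the semi-free resolutions is preserved along the tower and that the ghost length genuinely decreases by one at each stage (so the tower has the right length $n$). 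Once that is in place, combining with Proposition \ref{conelen} gives $\mathrm{cl}_{\mathscr{A}}M=\mathrm{gh.len}_{\mathscr{A}}M=n$, and re-reading this through $\langle\mathscr{A}\rangle_{n+1}=\mathrm{smd}(\mathrm{add}(\mathscr{A})^{\star(n+1)})$ yields the stated equivalence.
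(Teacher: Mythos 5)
There is a genuine gap in the direction $\mathrm{gh.len}_{\mathscr{A}}M\le n\Rightarrow M\in\langle\mathscr{A}\rangle_{n+1}$, which is the hard half. You build the tower $M\to M_1\to M_2\to\cdots$ with $M_{i+1}=\mathrm{Cone}(F_i\to M_i)$ and then assert that after $n$ steps one "deduces $M_n\in\mathrm{add}(\mathscr{A})$", the mechanism being that the ghost length drops by one at each stage. That is precisely the step that does not follow. If $g\colon M_1\to N$ is a ghost, then $g\circ(M\to M_1)$ is a composite of two ghosts and hence vanishes when $\mathrm{gh.len}_{\mathscr{A}}M=1$; but this only shows that $g$ factors through $\Sigma F_0$ via some map $\Sigma F_0\to N$ which need not be a ghost, so you cannot conclude $g=0$, i.e.\ you cannot conclude $\mathrm{gh.len}_{\mathscr{A}}M_1\le\mathrm{gh.len}_{\mathscr{A}}M-1$. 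Nothing in your sketch (minimality of resolutions, differentials landing in $\mathfrak{m}F$) repairs this, and in effect proving that the ghost length decreases along the tower is as hard as the proposition itself. The paper avoids the issue entirely: it keeps your $M_i$ (called $I_i$ there) but works instead with the cofibers $T_{i+1}=\mathrm{Cone}(\rho_i)$ of the \emph{composites} $\rho_i=g_i\circ\cdots\circ g_0\colon M\to I_{i+1}$, proves $T_{i+1}\in\langle\mathscr{A}\rangle_{i+1}$ by repeated use of the octahedral axiom, and then uses the vanishing of $\rho_k$ (a composite of $k+1$ ghosts) together with [TR3] to exhibit $\Sigma M$ as a retract of $T_{k+1}$. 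You should redirect the unwinding step onto the $T_i$ rather than the $M_i$.

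Two smaller points. First, the easy direction ($M\in\langle\mathscr{A}\rangle_{n+1}\Rightarrow\mathrm{gh.len}_{\mathscr{A}}M\le n$) is essentially correct and matches the paper's ghost-lemma induction, except that you need ghosts out of \emph{direct summands} of DG free modules to vanish (Lemmas \ref{summand} and \ref{ghostproj}), not just out of DG free modules. Second, your opening reduction, replacing $\langle\mathscr{A}\rangle_{n+1}$ by $\mathrm{add}(\mathscr{A})^{\star(n+1)}$ via Theorem \ref{semiqst} and Proposition \ref{conelen}, is not circular (the paper proves this identification independently in Proposition \ref{clghlen}), but your one-line justification via closure of $\mathscr{K}(\mathscr{SF}_{fg}(\mathscr{A}))$ under summands is insufficient: one must show the summand inherits a semi-free filtration of the \emph{same length}, which is the explicit splitting computation in the proofs of Theorem \ref{semiqst} and Proposition \ref{clghlen}. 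Note also that this whole detour is unnecessary: the paper's proof of Proposition \ref{ghlen} is a purely triangulated-category argument that never invokes Theorem \ref{semiqst}, and the equality $\mathrm{cl}_{\mathscr{A}}M=\mathrm{gh.len}_{\mathscr{A}}M$ is then a corollary rather than an input.
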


\begin{proof}
To prove the assertion above, it suffices to prove that $M\in
\langle\mathscr{A}\rangle_{n+1}$ if and only if $\mathrm{gh.len}_{\mathscr{A}}M\le
n$.

We first prove that $\mathrm{gh.len}_{\mathscr{A}}M\le n$ if $M\in
\langle \mathscr{A}\rangle_{n+1}$. If $M\in \langle
\mathscr{A}\rangle_1$, then $M$  is isomorphic in $\mathscr{D}(\mathscr{A})$
to a direct summand of a certain DG free $\mathscr{A}$-module with a finite DG free basis. By Lemma \ref{ghostproj}, for any $N\in \mathscr{D}(\mathscr{A})$ and ghost morphism
$M\to N$ is zero. Hence $\mathrm{gh.len}_{\mathscr{A}}M\le 0$. The case that
$n=0$ is done. Suppose inductively that we have proved the cases
that $n\le k-1, k\ge 1$. We need to prove the case that $n=k$.
Suppose that $M\in \langle \mathscr{A}\rangle_{k+1}$. Then $M$ is a
direct summand of a DG $\mathscr{A}$-module $F$, which lies in $\langle
\mathscr{A}\rangle_{k}\star \langle \mathscr{A}\rangle_1$. There is
a triangle  $$ F_0\stackrel{\alpha}{\to} F \stackrel{\beta}{\to}
F_1\stackrel{\gamma}{\to}\Sigma F_0 \eqno(1)$$  in $\mathscr{D}(\mathscr{A})$,
where $F_0,F_1$ are in $\langle \mathscr{A}\rangle_k$ and $\langle
\mathscr{A}\rangle_1$ respectively. In order to prove that $\mathrm{gh.dim}_{\mathscr{A}}
M \le k$, it suffices to show that the composite of any $k+1$ ghost
morphisms
$$M\stackrel{f_0}{\to} N_1\stackrel{f_1}{\to} N_2\stackrel{f_2}{\to}\cdots \stackrel{f_{k-1}}{\to}
N_{k}\stackrel{f_{k}}{\to} N_{k+1}$$ is zero in $\mathscr{D}(\mathscr{A})$. Let
$F=M\oplus M'$ in $\mathscr{D}(\mathscr{A})$. We define a morphism $g_0: F\to
N_1$ by $g_0|_M=f_0$ and $g|_{M'}=0$. Clearly, $g_0$ is also a ghost
morphism. Hence $g_0\circ \alpha$ is also a ghost morphism since $H(g_0\circ \alpha)=H(g_0)\circ H(\alpha)=0$. The composite of $k$ ghost morphisms
$$F_0\stackrel{\alpha}{\to}F\stackrel{g_0}{\to}N_1 \stackrel{f_1}{\to}
N_2\stackrel{f_2}{\to}\cdots \stackrel{f_{k-2}}{\to}N_{k-1}
\stackrel{f_{k-1}}{\to} N_{k}$$ is zero by the induction hypothesis.
Since the sequence  $$ \Hom_{\mathscr{D}(\mathscr{A})}(F_1,N_{k})\to
\Hom_{\mathscr{D}(\mathscr{A})}(F,N_{k})\to \Hom_{\mathscr{D}(\mathscr{A})}(F_0,N_{k})$$
is exact at the second term $\Hom_{\mathscr{D}(\mathscr{A})}(F,N_{k})$, there
is a morphism $\theta: F_1\to N_{k}$ in $\mathscr{D}(\mathscr{A})$ such that
$$f_{k-1}\circ f_{k-2}\circ \cdots\circ  f_1\circ g_0= \theta \circ
\beta .$$ The composite  $f_{k}\circ \theta$ is a ghost morphism by
the assumption that $f_{k}$ is a ghost morphism. Since $F_1\in
\langle \mathscr{A}\rangle_1$, the ghost morphism $f_{k}\circ \theta$ is zero in
$\mathscr{D}(\mathscr{A})$ by the induction hypothesis. Therefore, the
composite of $k$ ghost morphisms $$f_{k}\circ f_{k-1}\circ
\cdots\circ  f_1\circ g_0= f_{k}\circ \theta \circ \beta $$ is zero
in $\mathscr{D}(\mathscr{A})$.  By the definition of $g_0$, we conclude that
the composite of $k+1$ ghost morphisms $f_{k}\circ f_{k-1}\circ
\cdots\circ  f_1\circ f_0$ is a zero morphism in $\mathscr{D}(\mathscr{A})$.
Thus $\mathrm{gh.len}_{\mathscr{A}}M\le k$.

Now, it remains to show $M\in \langle \mathscr{A}\rangle_n$ when
$\mathrm{gh.len}_{\mathscr{A}}M\le n-1$.
 If $n=1$, then any ghost morphism from $M$
is a zero morphism in $\mathscr{D}(\mathscr{A})$ since $\mathrm{gh.len}_{\mathscr{A}}M=0$.
Since $M$ is compact, $H(M)$ is a finitely generated $H(\mathscr{A})$-module.
By Lemma \ref{homologysurj}, there is a DG free $\mathscr{A}$-module $F$  and a
DG morphism $f: F\to M$ such that $F$ has a finite DG free basis and $H(f)$ is surjective.
There is an
exact triangle
$$F\stackrel{f}{\to} M\stackrel{g}{\to} I \stackrel{h}{\to} \Sigma
F $$ in $\mathscr{D}(\mathscr{A})$. The morphism $g$ is $0$ in $\mathscr{D}(\mathscr{A})$ since it
is in fact a ghost morphism by the fact that $H(f)$ is surjective.
For $M$, we have the following trivial exact triangle
$$M\stackrel{\mathrm{id}}{\to} M \to 0 \to \Sigma M.$$
By [TR3], there is a map $\eta: M\to F$ such that the following
diagram
\begin{align*}
\xymatrix{M \ar[d]^{\mathrm{id}}\ar[r] & 0\ar[d]^0\ar[r] & \Sigma M\ar@{-->}[d]^{\Sigma(\eta)}\ar[r]^{\mathrm{id}} &\Sigma M \ar[d]^{\mathrm{id}}  \\
M\ar[r]^{g} & I\ar[r]^{h} &\Sigma F\ar[r]^{\Sigma f}& \Sigma M \\
}
 \end{align*}
commutes.  This implies that $M$ is a retract (direct summand) of
$F$. Hence $M\in \langle \mathscr{A}\rangle_1$. The case $n=1$ is
done.

 Suppose inductively that we have proved the case $n\le k, k\ge
1$. We need to prove the case $n=k+1$. By Lemma \ref{homologysurj},
there is a DG morphism $f_0: F_0\to M$ such that $H(f_0)$ is
surjective and $F_0$ is a DG free $\mathscr{A}$-module with a finite DG free basis. This induces an exact triangle $$
F_0\stackrel{f_0}{\to}M\stackrel{g_0}{\to} I_1\to \Sigma F_0$$ in
$\mathscr{D}(\mathscr{A})$, where $g_0$ is a ghost morphism and $I_1\in \mathscr{D}^c(\mathscr{A})$.  In the same way,
we can construct inductively, a sequence of exact triangles
\begin{align*}
F_1\stackrel{f_1}{\to}I_1 &\stackrel{g_1}{\to} I_2\to \Sigma F_1 \\
F_2\stackrel{f_2}{\to}I_2 & \stackrel{g_2}{\to} I_3\to \Sigma F_2 \\
\cdots\quad\quad &\cdots \quad\quad\cdots \quad \\
\quad \quad \quad \quad F_{k-1}\stackrel{f_{k-1}}{\to} I_{k-1} &
\stackrel{g_{k-1}}{\to}
I_k\to \Sigma F_{k-1} \\
\quad \quad \quad \quad F_k \stackrel{f_{k}}{\to}I_k &
\stackrel{g_{k}}{\to}
I_{k+1}\to \Sigma F_{k}, \\
\end{align*}
where each $I_i\in \mathscr{D}^c(\mathscr{A})$,  $F_i$ is a DG free $\mathscr{A}$-module with a finite DG free basis and $g_i$ is a ghost morphism,
for any $i=1, 2,\cdots, k$. Let $$\rho_i=g_i\circ g_{i-1}\circ \cdots
g_1\circ g_0,\, i =1, 2,\cdots, k.$$ We have a sequence of exact
triangles $$M\stackrel{\rho_i}{\to} I_{i+1} \to T_{i+1} \to \Sigma M
, \quad i=1,2,\cdots, k.
$$
By the octahedral axiom,  there is an exact triangle $$\Sigma F_0
\to T_2 \to \Sigma F_1 \to \Sigma^2 F_0$$ fitting into the following
commutative diagram
\begin{align*} \xymatrix{
M\ar[d]^{\mathrm{id}}\ar[r]^{g_0} &I_1\ar[r]\ar[d]^{g_1} & \Sigma F_0 \ar[r]\ar@{-->}[d] &\Sigma M\ar[d]^{\Sigma(\mathrm{id})} \\
M\ar[r]^{\rho_1}& I_2\ar[r]\ar[d] & T_2 \ar[r]\ar@{-->}[d] &\Sigma M \\
&\Sigma F_1\ar@{=}[r]\ar[d] &\Sigma F_1\ar@{-->}[d] &\\
&\Sigma I_1 \ar[r] &\Sigma^2 F_0 & \\
}.
 \end{align*}
This implies that $T_2\in \langle \mathscr{A} \rangle_2$. We assume
inductively that $T_i \in \langle
\mathscr{A}\rangle_i$ for any $2\le i\le l$. There exists an exact triangle
$$T_l\to T_{l+1}\to \Sigma F_l \to \Sigma T_1$$ fitting into the
following commutative diagram
\begin{align*} \xymatrix{
M\ar[d]^{\mathrm{id}}\ar[r]^{\rho_{l-1}} &I_l\ar[r]\ar[d]^{g_l} & T_l \ar[r]\ar@{-->}[d] &\Sigma M\ar[d]^{\Sigma(\mathrm{id})} \\
M\ar[r]^{\rho_{l}}& I_{l+1}\ar[r]\ar[d] & T_{l+1} \ar[r]\ar@{-->}[d] &\Sigma M \\
&\Sigma F_l\ar@{=}[r]\ar[d] &\Sigma F_l\ar@{-->}[d] &\\
&\Sigma I_l \ar[r] &\Sigma T_l & \\
}
 \end{align*}
by using the octahedral axiom again.  Hence $T_{l+1}\in \langle
\mathscr{A}\rangle_{l+1}$ by the induction hypothesis. Then we
prove $T_{k+1}\in \langle \mathscr{A}\rangle_{k+1} $ by the
induction above.

On the other hand, since $\mathrm{gh.len}_{\mathscr{A}} M \le k$, the composite
of $k+1$ ghost morphisms $\rho_{k}=g_k\circ g_{k-1}\circ \cdots
g_1\circ g_0$ is zero in $\mathscr{D}(\mathscr{A})$. By [TR3], there is a map
$\eta: \Sigma M\to T_{k+1}$ such that the following diagram
\begin{align*}
\xymatrix{M \ar[d]^{\mathrm{id}}\ar[r] & 0\ar[d]^0\ar[r] & \Sigma M\ar@{-->}[d]^{\eta}\ar[r]^{\mathrm{id}} &\Sigma M \ar[d]^{\mathrm{id}}  \\
M\ar[r]^{\rho_{k}} & I_{k+1}\ar[r] &T_{k+1}\ar[r]^{\Sigma f}& \Sigma M \\
}
 \end{align*}
commutes. Hence $\Sigma M$ is a retract of $T_{k+1}$ in
$\mathscr{D}(\mathscr{A})$. So $M\in \langle \mathscr{A}\rangle_{k+1} $ and we
prove the case $n=k+1$. Thus $M\in \langle \mathscr{A}\rangle_{n}$
if $\mathrm{gh.len}_{\mathscr{A}}M\le n-1$.

\end{proof}

\begin{prop}\label{clghlen}
 Assume that $\mathscr{A}$ is a connected cochain DG algebra and $M$ is an object in $\mathscr{D}^c(\mathscr{A})$. Then $\langle \mathscr{A}\rangle_n=\mathrm{add}(\mathscr{A})^{\star n}$ and $\mathrm{cl}_{\mathscr{A}}M=\mathrm{gh.len}_{\mathscr{A}}M$.
\end{prop}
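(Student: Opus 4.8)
The statement contains two assertions, and the second one follows from the first together with Propositions~\ref{conelen} and~\ref{ghlen}. Indeed every object of $\mathrm{add}(\mathscr{A})^{\star m}$ is an $(m-1)$-fold extension of finite direct sums of suspensions of $\mathscr{A}$, hence lies in $\mathscr{D}^c(\mathscr{A})$; so once we know $\langle\mathscr{A}\rangle_n=\mathrm{add}(\mathscr{A})^{\star n}$ for all $n\ge1$, Proposition~\ref{conelen} gives, for a compact $M$, that $\mathrm{cl}_{\mathscr{A}}M\le n\iff M\in\mathrm{add}(\mathscr{A})^{\star n+1}=\langle\mathscr{A}\rangle_{n+1}$, while Proposition~\ref{ghlen} gives $M\in\langle\mathscr{A}\rangle_{n+1}\iff\mathrm{gh.len}_{\mathscr{A}}M\le n$; comparing these for every $n\ge0$ — both invariants being finite since $M$ is compact, and both equal to $-1$ when $H(M)=0$ — yields $\mathrm{cl}_{\mathscr{A}}M=\mathrm{gh.len}_{\mathscr{A}}M$. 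So the plan is to prove $\langle\mathscr{A}\rangle_n=\mathrm{add}(\mathscr{A})^{\star n}$ for all $n\ge1$, by induction on $n$. The inclusion $\mathrm{add}(\mathscr{A})^{\star n}\subseteq\langle\mathscr{A}\rangle_n$ is formal, since $\mathrm{add}(\mathscr{A})\subseteq\mathrm{smd}(\mathrm{add}(\mathscr{A}))=\langle\mathscr{A}\rangle_1$ and $\mathcal{S}\star\mathcal{T}\subseteq\mathrm{smd}(\mathcal{S}\star\mathcal{T})=\mathcal{S}\diamond\mathcal{T}$. For the reverse inclusion, the associativity of $\diamond$, the identity $\mathcal{C}_1\diamond\cdots\diamond\mathcal{C}_n=\mathrm{smd}(\mathcal{C}_1\star\cdots\star\mathcal{C}_n)$, and the inductive hypothesis give $\langle\mathscr{A}\rangle_{n+1}=\langle\mathscr{A}\rangle_n\diamond\langle\mathscr{A}\rangle_1=\mathrm{smd}(\mathrm{add}(\mathscr{A})^{\star n}\star\mathrm{add}(\mathscr{A}))=\mathrm{smd}(\mathrm{add}(\mathscr{A})^{\star n+1})$, so everything reduces to the key claim that $\mathrm{add}(\mathscr{A})^{\star m}$ is closed under direct summands in $\mathscr{D}(\mathscr{A})$ for each $m\ge1$ (the base case $n=1$ being the instance $m=1$, i.e. $\langle\mathscr{A}\rangle_1=\mathrm{add}(\mathscr{A})$).

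To prove the key claim, let $M$ be a direct summand of some $N\in\mathrm{add}(\mathscr{A})^{\star m}$, say $N\cong M\oplus M'$ in $\mathscr{D}(\mathscr{A})$, where $M$ and $M'$ are compact (being summands of the compact object $N$). By Proposition~\ref{conelen}, $t:=\mathrm{cl}_{\mathscr{A}}N\le m-1$ is finite. Choose minimal semi-free resolutions $G_M\stackrel{\simeq}{\to}M$ and $G_{M'}\stackrel{\simeq}{\to}M'$; these exist by \cite[Proposition 3.3]{MW2} and are unique up to DG isomorphism by Lemmas~\ref{heq} and~\ref{toiso}. Then $G_M\oplus G_{M'}$ is again a minimal semi-free resolution, hence DG isomorphic to the minimal semi-free resolution of $N$, which by Proposition~\ref{clfree} may be chosen of DG free class $t$. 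Thus $G_M$ is a DG-module direct summand of a semi-free DG $\mathscr{A}$-module of DG free class $t$, and $G_M^{\#}$ is finitely generated over $\mathscr{A}^{\#}$; applying the construction in the proof of Theorem~\ref{semiqst} (the finite DG free class case) to the decomposition $G_M\oplus G_{M'}$ yields a semi-free filtration of $G_M$ of length at most $t$. Hence $\mathrm{cl}_{\mathscr{A}}M\le\mathrm{DGfree.class}_{\mathscr{A}}G_M\le t\le m-1$, and Proposition~\ref{conelen} puts $M$ back into $\mathrm{add}(\mathscr{A})^{\star m}$, completing the induction and the proof.

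The step I expect to demand the most care is this last one, where the DG Quillen--Suslin theorem supplies the essential input: one must genuinely descend from ``$M$ is a direct summand of $N$ in $\mathscr{D}(\mathscr{A})$'' to ``$G_M$ is a direct summand of $G_N$ in $\mathscr{C}(\mathscr{A})$'' — which is legitimate precisely because minimal semi-free resolutions exist and are unique (Lemmas~\ref{heq} and~\ref{toiso}) — and then one must check that the explicit filtration bookkeeping in the proof of Theorem~\ref{semiqst} really does not increase the DG free class. Without the summand-closure of $\mathrm{add}(\mathscr{A})^{\star m}$ guaranteed by Theorem~\ref{semiqst}, the triangle-built tower $\langle\mathscr{A}\rangle_{\bullet}$ and the resolution-built invariant $\mathrm{cl}_{\mathscr{A}}$ would in general differ, so this summand-closure is the heart of the matter.
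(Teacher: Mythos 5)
Your proof is correct and follows essentially the same route as the paper: both reduce the identity $\langle\mathscr{A}\rangle_n=\mathrm{add}(\mathscr{A})^{\star n}$ to showing that $\mathrm{add}(\mathscr{A})^{\star n}$ is closed under direct summands, realize the summand at the level of minimal semi-free resolutions, and then run the filtration construction from the proof of Theorem~\ref{semiqst} to bound the DG free class of the summand by that of the ambient resolution, invoking Proposition~\ref{conelen} at both ends. You are in fact somewhat more careful than the paper at the one delicate point — justifying, via uniqueness of minimal semi-free resolutions (Lemmas~\ref{heq} and~\ref{toiso}), the passage from ``$M$ is a summand of $N$ in $\mathscr{D}(\mathscr{A})$'' to ``$G_M$ is a summand of $F_N$ in $\mathscr{C}(\mathscr{A})$'', which the paper asserts without comment.
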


\begin{proof}
Obviously, $\mathrm{add}(\mathscr{A})^{\star n}\subseteq  \langle\mathscr{A}\rangle_n$ since $\langle \mathscr{A}\rangle_n=\mathrm{smd}(\mathrm{add}(\mathscr{A})^{\star n})$. Conversely,
 any object $M$ in $\langle \mathscr{A}\rangle_n=\mathrm{smd}(\mathrm{add}(\mathscr{A})^{\star n})$ is isomorphic to a direct summand of DG $\mathscr{A}$-module $N \in \mathrm{add}(\mathscr{A})^{\star n}\subseteq \mathscr{D}^c(\mathscr{A})$.
By Proposition \ref{conelen}, we have $\mathrm{cl}_{\mathscr{A}}N\le n-1$. By Proposition \ref{clfree}, $N$ admits a minimal semi-free resolution $F_N$ with $\mathrm{DGfree. class}_{\mathscr{A}}F_n=t\le n-1$. The DG module $M$ is isomorphic to a DG module $P$ which is a direct summand of $F_N$.
Since $F_N$ is a finitely generated semi-free DG $\mathscr{A}$-module, $P$ is a semi-free DG module by Theorem \ref{semiqst}.

Let $F_N=P\oplus Q$, for some DG $\mathscr{A}$-module $Q$.
Since $\mathrm{DGfree.class}_{\mathscr{A}}F_N=t$,
$F_N$ admits a strictly increasing semi-free filtration:
$$0=F_N(-1)\subset F_N(0)\subset F_N(1)\subset  \cdots \subset F_N(t-1)\subset
F_N(t)=F_N, $$
where each quotient $F_N(i)/F_N(i-1)$ is a DG free $\mathscr{A}$-module.  Let $E_i=\{e_{i_j}|j\in I_i\}$ be a DG free basis of $F_i=F_N(i)/F_N(i-1), 0\le i\le t$. Then $\partial_{F_N}(e_{i_j})\subseteq \mathscr{A}(\bigsqcup\limits_{j=0}^{i-1}E_j)$,
 each graded free $\mathscr{A}^{\#}$-module $F_N(r)^{\#}$ can be decomposed as
 $$\bigoplus\limits_{i=0}^r\bigoplus\limits_{j\in I_i}\mathscr{A}^{\#}e_{i_j}, 0\le r\le t.$$
Let $e_{i_j}=p_{i_j}+q_{i_j}$, where $p_{i_j}\in P$ and $q_{i_j}\in Q$, for any $j\in I_i, i=0,1,\cdots,t$. We have
$$F_N(r)=[\sum\limits_{i=0}^r(\sum\limits_{j\in I_i}\mathscr{A}p_{i_j})]\oplus [\sum\limits_{i=0}^r(\sum\limits_{j\in I_i}\mathscr{A}q_{i_j})], r=0,1,\cdots, t.$$
Hence $$F_r=F_N(r)/F_N(r-1)=(\sum\limits_{j\in I_r}\mathscr{A}\overline{p_{i_j}})\oplus(\sum\limits_{j\in I_r}\mathscr{A}\overline{q_{r_j}}), r=0, 1,\cdots, t.$$
By Lemma \ref{useful}, each $\sum\limits_{j\in I_r}\mathscr{A}\overline{p_{i_j}}$ is either a zero module or a DG free $\mathscr{A}$-module.
Let $\omega_{r_{\lambda}},\lambda\in \Lambda_r$ be its DG free basis ($\Lambda_r=\emptyset$ and $\omega_{r_{\lambda}}=0$ if $\sum\limits_{j\in I_r}\mathscr{A}\overline{p_{r_j}}=0$). Then $$\sum\limits_{j\in I_r}\mathscr{A}\overline{p_{r_j}}=\bigoplus\limits_{j\in \Lambda_r}\mathscr{A}\omega_{r_j}.$$ Note that
$$\bigoplus\limits_{i\in I_r}\mathscr{A}^{\#}e_{r_i}=\bigoplus\limits_{\lambda\in \Lambda_r}\mathscr{A}^{\#}\omega_{r_{\lambda}}\oplus (\sum\limits_{j\in I_r}\mathscr{A}\overline{q_{r_j}})^{\#}$$ is a graded $\mathscr{A}^{\#}$-submodule of $F^{\#}$. So $\bigoplus\limits_{\lambda\in \Lambda_r}\mathscr{A}^{\#}\omega_{r_{\lambda}}$ is also a graded $\mathscr{A}^{\#}$-submodule of $F_N^{\#}$. Since $\partial_{F_r}(\omega_{r_{\lambda}})=0$, we have
$$\partial_{F_N}(\omega_{r_{\lambda}})\in F_N(r-1)\cap \sum\limits_{i=0}^r(\sum\limits_{j\in I_i}\mathscr{A}p_{i_j})=\sum\limits_{i=0}^{r-1}(\sum\limits_{j\in I_i}\mathscr{A}p_{i_j}),   r=0,1,\cdots, t.$$
Let $P(r)=\sum\limits_{i=0}^r(\sum\limits_{j\in I_i}\mathscr{A}p_{i_j}), r=0,1,\cdots,t$. Then
$$0\subseteq P(0)\subseteq P(1)\subseteq P(2)\subseteq \cdots \subseteq P(t-1)\subseteq P(t)=P$$ is a filtration of DG $\mathscr{A}$-submodules of $P$. Moreover, $P(r)/P(r-1)=\sum\limits_{j\in I_r}\mathscr{A}\overline{p_{r_j}}$ is either zero or a DG free $\mathscr{A}$-module $\bigoplus\limits_{j\in \Lambda_r}\mathscr{A}\omega_{r_j}$. If $P(r)/P(r-1)=0$, for some $r\in \mathbb{N}$, then we just cancel such $P(r)$. In this way, we can get a strictly increasing semi-free filtration of $P$. Then $\mathrm{DGfree.class}_{\mathscr{A}}P\le t$ and hence $M\in \mathrm{add}(\mathscr{A})^{\star t+1}\subseteq \mathrm{add}(\mathscr{A})^{\star n}$.

\end{proof}

 \begin{defn}
For an DG $\mathscr{A}$-module $M$, its $\mathscr{A}$-level is defined to be
$$\mathrm{level}_{\mathscr{D}(\mathscr{A})}^{\mathscr{A}}(M)=\inf\{n\in \Bbb{N}\cup \{0\}| M\in
\langle \mathscr{A}\rangle_n\}.$$
\end{defn}
 This invariant is
originally introduced by Avramov, Buchweitz, Iyengar and Miller in
\cite{ABIM}. The $\mathscr{A}$-level of $M$ counts the number of steps required to build
$M$ out of $\mathscr{A}$ via triangles in $\mathscr{D}(\mathscr{A})$. It is a very
important numerical invariant in the study of the derived category
of compact DG $\mathscr{A}$-modules. By Proposition \ref{ghlen}, we can easily get the following proposition.
\begin{prop}\label{ghlenlevel}
For any $M\in \mathscr{D}^c(\mathscr{A})$, we have $\mathrm{gh.len}_{\mathscr{A}}M=\mathrm{level}_{\mathscr{D}(\mathscr{A})}^{\mathscr{A}}(M)-1$.
\end{prop}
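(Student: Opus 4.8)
The plan is to derive this statement directly from Proposition \ref{ghlen} by unwinding the definition of $\mathscr{A}$-level. Recall that $\mathrm{level}_{\mathscr{D}(\mathscr{A})}^{\mathscr{A}}(M) = \inf\{n \in \mathbb{N} \cup \{0\} \mid M \in \langle \mathscr{A}\rangle_n\}$, while Proposition \ref{ghlen} asserts, for $M \in \mathscr{D}^c(\mathscr{A})$, that $\mathrm{gh.len}_{\mathscr{A}}M = n$ if and only if $M \in \langle \mathscr{A}\rangle_{n+1} \setminus \langle \mathscr{A}\rangle_n$. So the bulk of the work is already done; what remains is a clean bookkeeping argument relating these two infima, together with attention to the degenerate cases (the zero object, and objects where the ghost length is $-1$).

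First I would handle the non-degenerate case. Suppose $M \in \mathscr{D}^c(\mathscr{A})$ is not isomorphic to $0$ in $\mathscr{D}(\mathscr{A})$, so $\mathrm{gh.len}_{\mathscr{A}}M = n$ for some $n \ge 0$. By Proposition \ref{ghlen}, this is equivalent to $M \in \langle \mathscr{A}\rangle_{n+1}$ and $M \notin \langle \mathscr{A}\rangle_n$. Since the subcategories $\langle \mathscr{A}\rangle_j$ form an increasing chain in $j$ (each $\langle \mathscr{A}\rangle_j \subseteq \langle \mathscr{A}\rangle_{j+1}$, which follows from $\overline{\mathrm{add}}$ or $\mathrm{add}$ containing the trivial triangle $0 \to X \to X$), the condition ``$M \in \langle \mathscr{A}\rangle_{n+1}$ and $M \notin \langle \mathscr{A}\rangle_n$'' says precisely that $n+1$ is the least integer $j$ with $M \in \langle \mathscr{A}\rangle_j$, i.e. $\mathrm{level}_{\mathscr{D}(\mathscr{A})}^{\mathscr{A}}(M) = n+1$. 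Hence $\mathrm{gh.len}_{\mathscr{A}}M = n = \mathrm{level}_{\mathscr{D}(\mathscr{A})}^{\mathscr{A}}(M) - 1$, as claimed.

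It then remains to dispatch the degenerate case where $M \simeq 0$ in $\mathscr{D}(\mathscr{A})$. Here $\mathrm{gh.len}_{\mathscr{A}}M = -1$ by definition, while $M \in \langle \mathscr{A}\rangle_0 = 0$ (the zero subcategory in the convention of the excerpt, since $\langle\mathscr{A}\rangle_n$ is only defined for $n \ge 1$, but $M\cong 0$ lies in every $\langle\mathscr{A}\rangle_j$ as a direct summand, and $\langle\mathscr{A}\rangle_0$ should be read as $0$), so $\mathrm{level}_{\mathscr{D}(\mathscr{A})}^{\mathscr{A}}(M) = 0$ and again $-1 = 0 - 1$ holds. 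This completes the proof. The only subtlety — and the one point requiring care — is the treatment of the index $0$: one must check that the convention $\langle \mathscr{A}\rangle_0 = 0$ is compatible with the monotonicity of the chain and with the $-1$ convention for ghost length of the zero object, so that both invariants agree at their respective ``base'' values; I do not expect any genuine obstacle beyond this, since Proposition \ref{ghlen} supplies the entire content for $n \ge 0$.
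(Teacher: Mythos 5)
Your proof is correct and follows the same route as the paper, which simply states that the proposition follows from Proposition \ref{ghlen}; your write-up merely fills in the routine bookkeeping (monotonicity of the chain $\langle\mathscr{A}\rangle_j$ and the conventions for the zero object) that the paper leaves implicit.
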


In \cite{HL1,HL2}, Hovey-Lockridge introduced and studied ghost dimension for rings. In a similar way,  we define the left ghost dimension of $\mathscr{A}$ as
$$l.\mathrm{gh.dim}(\mathscr{A}) = \sup\{\mathrm{gh.len}_{\mathscr{A}}M | M\in
\mathscr{D}^c(\mathscr{A})\}.$$
Similarly, we can define the right global dimension of $\mathcal{A}$ as $$r.\mathrm{gh.dim.}\mathscr{A} =
\sup\{\mathrm{gh.len}_{\mathscr{A}^{op}}M | M\in \mathscr{D}^c(\mathscr{A}^{op})\}.$$

\begin{rem}
Applying the results above, we can get the characterizations of $l.\mathrm{gh.dim}(\mathscr{A})$ in terms of the invariants cone length and level.
Let $\mathscr{A}$ be a connected cochain DG algebra. Then
\begin{align*}
l.\mathrm{gh.dim}(\mathscr{A}) & = \sup\{\mathrm{cl}_{\mathscr{A}}M | M\in
\mathscr{D}^c(\mathscr{A})\} \\
&=\sup\{\mathrm{level}_{\mathscr{D}(\mathscr{A})}^{\mathscr{A}}(M)-1| M\in \mathscr{D}^c(\mathscr{A})\}.
\end{align*}
\end{rem}

\subsection*{Acknowledgments}
The work is supported by the National Natural Science Foundation of
China (No. 11871326), by Key Disciplines of Shanghai Municipality(No. S30104), and the Innovation Program of
Shanghai Municipal Education Commission (No. 12YZ031).

\def\refname{References}

\end{document}